\newcommand{\N}{\mathbb{N}}
\newcommand{\Z}{\mathbb{Z}}
\newcommand{\R}{\mathbb{R}}
\newcommand{\C}{\mathbb{C}}
\newcommand{\Ad}{\mathrm{Ad}}
\DeclareMathOperator{\supp}{supp}
\DeclareMathOperator{\Ker}{Ker}
\newtheorem{thm}{Theorem}[section]
\newtheorem{df}[thm]{Definition}
\newtheorem{prop}[thm]{Proposition}
\newtheorem{lem}[thm]{Lemma}
\newtheorem{cor}[thm]{Corollary}
\newtheorem{rem}[thm]{Remark}
\newtheorem{ass}[thm]{Assumption}
\begin{document}
\begin{frontmatter}
\title{Linear differential equations on $\mathbb{P}^{1}$ and root systems}
\date{}
\author{Kazuki Hiroe}
\ead{kazuki@kurims.kyoto-u.ac.jp}
\address{Research Institute of Mathematical Sciences, 
    Kyoto University, Kyoto 606-8502 Japan.
}

\begin{abstract}
    In this paper, we study the Euler transform on  
    linear ordinary differential operators on $\mathbb{P}^{1}$.
    The spectral type is the tuple of integers which 
    count the multiplicities of 
    local formal  solutions with the same leading terms.
    We compute the changes of spectral types under the 
    action of the Euler transform and show that the 
    changes of spectral types generate a transformation
    group of  
    a $\mathbb{Z}$-lattice which is isomorphic to a quotient lattice 
    of a Kac-Moody root lattice with the Weyl group as the transformation
     group.
\end{abstract}
\begin{keyword}
    Euler transform \sep Kac-Moody root system \sep 
    Linear ordinary differential equations of 
    polynomial coefficients

    \MSC 22E65 \sep 33C80 \sep 47E05 \sep
    44A20 
\end{keyword}
\end{frontmatter}
\section*{Introduction}
The integral transformation
 \[
  I_{a}^{\lambda}f(x)=\frac{1}{\Gamma(\lambda)}
  \int_{a}^{x}(x-t)^{\lambda-1}f(t)\,dt,\quad (\mathrm{Re}\lambda>0)
   \]
   and its analytic continuation with respect to $\lambda\in \mathbb{C}$
   is called the Euler transform (or Riemann-Liouville integral) of $f(x)$ 
for $a,\lambda\in\C$. 
This integration is  fundamental for  the  theory of fractional 
calculus  
because of the following observation.
If $f(x)$ satisfies suitable conditions, for example 
$f(x)$ is holomorphic on a neighbourhood of $x=a$ or 
$f(x)=(x-a)^{\alpha}\phi(x)$ 
where $\mathrm{Re\,}\alpha>-1$ and $\phi(x)$ 
is a holomorphic function on a neighbourhood of $x=a$ and $\phi(a)\neq 0$,
then it is known that
   \[
     I^{-n}_{a}f(x)=\frac{d^{n}}{dx^{n}}f(x).
   \]
 Hence one can regard the  Euler transform as 
 a fractional or complex powers of the derivation $\partial=\frac{d}{dx}$.
 This may allow us to write 
 $\partial^{\lambda}f(x)=I_{a}^{-\lambda}f(x)$ formally. 

 Moreover one can show a generalization of the Leibniz rule, for example,
   \[
   \partial^{\lambda}p(x)\psi(x)=\sum_{i=0}^{n}\begin{pmatrix}\lambda\\i\end{pmatrix}p^{(i)}(x)\partial^{\lambda-i}\phi(x),
   \] 
 where $p(x)$ is a polynomial of degree equal to or less than $n$. 
 Now let us consider a differential operator with polynomial coefficients,
 \[
  P(x,\partial)=\sum_{i=0}^{n}a_{i}(x)\partial^{i}.
 \]
 The above Leibniz rule assures that 
 $Q(\partial,x)=
 \partial^{\lambda+m}P(x,\partial)\partial^{-\lambda}$ is again
 the new differential operator with polynomial coefficients 
 if we choose a suitable $m\in\Z$. 
 Moreover if $f(x)$ satisfies $P(x,\partial)f(x)=0$ 
 and $I^{-\lambda}_{a}f(x)$ is well-defined 
 for some $a,\lambda\in \C$, then we carry out the formal
 computation,
 \begin{align*}
  \partial^{\lambda+m}P(x,\partial)
  \partial^{-\lambda}I_{a}^{-\lambda}f(x)&
  =\partial^{\lambda+m}P(x,\partial)\partial^{-\lambda+\lambda}f(x)\\
 &=\partial^{-\lambda+m}P(x,\partial)f(x)\\
 &=0.
\end{align*}
Thus 
we can obtain the following observation.
 The fractional derivative
 $\partial^{\lambda}$ turns a differential equation 
 with polynomial coefficients $P(x,\partial)u=0$ 
 into a new differential equation 
 with polynomial coefficients $Q(x,\partial)u=0$, 
 and moreover a solution of $Q(x,\partial)u=0$ 
 is given by a solution of $P(x,\partial)u=0$ 
 if the Riemann-Liouville integral is well-defined and 
 satisfies suitable conditions. 
 Thus it is natural to be wondering about what kind of 
 differential  equations can be obtained 
 by the Euler transform 
 from known equations or 
 how we can reduce 
 a difficult equation to an easier one.
                                                              
 Let $K$ be an algebraically closed field of 
 characteristic zero and $W(x)$ the ring of  
 differential operators with coefficients in $K(x)$, 
 the field of rational functions.  
 In \cite{O}, T. Oshima gives 
 an algebraic definition of 
 the Euler transform on $W(x)$ 
 as an analogue of 
 the middle convolution defined by N. Katz 
 in \cite{K}. 
 In this paper, we shall consider a generalization
 of the works of Katz and Oshima who mainly study
 Fucshian differential operators, i.e, operators 
 only with regular singular points.
 Namely we shall follow Oshima's definition of the algebraic Euler 
 transform and investigate the properties of it on the 
 theory of linear differential operators on $\mathbb{P}^{1}$
with irregular singular points. 
 
On the other hand, in \cite{C}, 
W. Crawley-Boevey clarifies
the correspondence between systems of first order Fuchsian 
linear differential equations 
and certain representations of quivers. 
As a consequence of this correspondence,
he gives the necessary and sufficient condition 
of the existence of irreducible differential equations
with the prescribed local data
by using the existence theorem of irreducible representations
of quivers. 
Moreover he shows the middle convolution (or the Euler transform) 
can be obtained from the 
operations on representations of quivers
, so-called the reflection functors which
induce the actions  of the Weyl groups on 
the spaces of dimension vectors of representations.
This picture enable us to realize the changes of local data
of Fuchsian differential equations 
given by the middle convolution 
in terms of  the actions of the Weyl groups
on the root systems of the quivers.

 In this paper, we mainly deal with differential equations 
 with at most unramified irregular singular points 
 and give a realization of the action of the Euler transform 
 in terms of the action of the Weyl group of a Kac-Moody root 
 system as a generalization of Crawley-Boevey's result. 

 Our result can be roughly explained  as follows. 
 Let us take $P\in W(x)$ with at most 
 unramified irregular singular points. 
 We impose some generic conditions on $P$ 
 (see Section \ref{Euler transform and Weyl group} for precise conditions).
 From local structures around singular points, we shall define 
 the notion of the spectral type, the tuple of positive 
 integers which 
 count the multiplicities of local formal 
 solutions with the same leading terms.
We compute the changes of the spectral type given by
the Euler transform and other algebraic transformations 
explicitly.
Then we show that  the changes of the spectral type 
give  $\mathbb{Z}$-lattice automorphisms and 
these automorphisms generates a transformation group on this lattice.
Let us denote by $L(P)$ and $\tilde{W}(P)$ the lattice and the 
transformation group respectively.
 Then finally we shall show that 
 the $\tilde{W}(P)$-module $L(P)$ 
 is isomorphic to a quotient lattice of a 
 Kac-Moody root lattice with the Weyl group action.
 That is to say,  
 there exists the root lattice $Q(P)$ and the Weyl group $W(P)$ 
 associated with a symmetric Kac-Moody root system such that $L(P)$
 is isomorphic to a quotient of $W(P)$-module $Q(P)$
 (see Theorem \ref{Weyl group}). 
      
  Furthermore, 
  we define a generalization of the root system in $L(P)$ 
  as an analogue of the root system of $Q(P)$. 
  Then we show that if $P$ is irreducible, 
  then the spectral type of $P$  
  is the root of this generalized root system 
  (see Theorem \ref{root and irreducible}).

 As a corollary, 
 we can show an analogue of the Katz algorithm of the differential
 operators with irregular singular points
 obtained by D. Arinkin and D. Yamakawa 
 independently (\cite{A}, \cite{Y}). 
 
 In \cite{Bo}, P. Boalch considers vector bundles with 
 meromorphic connections 
 on $\mathbb{P}^{1}$
 which have finitely many regular singular points 
 and one unramified irregular singular point. 
 He gives a correspondence between these connections 
 and representations of quivers as a generalization 
 of the result of Crawley-Boevey. 
 If we restrict our case to Boalch's setting, 
 we can obtain the root systems
 whose Dynkin diagrams agree with Boalch's quivers if we forget the 
 orientations of quivers.

 As examples of our correspondence with root systems, 
 let us consider confluent equations of Heun's differential equations. 
 Then we can obtain extended Dynkin diagrams of affine Lie algebras, 
 $D^{(1)}_{4}$, $A^{(1)}_{3}$, $A^{(1)}_{2}$, $A^{(1)}_{1}$ 
 and $A^{(1)}_{1}\oplus A^{(1)}_{1}$. 
 These agree with symmetries of B\"acklund 
 transforms of Painlev\'e equations 
 which are obtained from these Heun's equations 
 with an apparent singular point (see Section \ref{Heun and Painleve}).
	 
\section{Local structures of differential operators}
Let us give a review of a formal theory of differential operators around 
regular and irregular singular points. 
The contents of this section are well-known 
and found in standard references 
(for example \cite{M}, \cite{M1}, \cite{S}, \cite{W}, etc.).

\subsection{Basic notation}
Let us denote by  $K$ 
an algebraically closed field of characteristic zero. 
We denote by $K[x]$, $K(x)$ and  $K( (x))$ the ring of polynomials, 
the field of rational functions and the quotient field of the ring of 
formal power series $K[ [x] ]$ respectively.
Since $K$ is algebraically closed, any element $f(x)\in K(x)$ 
decomposes as a product of linear factors, $f(x)=\prod_{i=1}^{m}(x-a_{i})^{m_{i}}$ where 
$m_{i}\in \mathbb{Z}$ and $a_{i}\in K$ 
satisfying $a_{i}\neq a_{j}$ if $i\neq j$.
If $m_{i}>0$ (resp. $m_{i}<0$), then $a_{i}$ is called 
the \textit{zero} (resp. \textit{pole}) of $f$.
We can define the discrete valuation $v_{a}$ for any $a\in K\cup\{\infty\}$
on $K(x)$ as follows.
As we see, for any $f(x)$ and $a\in K$ there exists $m_{a}\in \mathbb{Z}$
such that $f(x)=(x-a)^{m_{a}}g(x)$ where $g(x)$ has
no pole and no zero at $a\in K$.
Then we define $v_{a}(f(x))=m_{a}$.
If $f(x)=0$, we define $v_{a}(0)=\infty$.
Similarly we define $v_{\infty}(f(x))=v_{0}(f(x^{-1}))$ for any 
$f(x)\in K(x)$.
We have natural embeddings $K(x)\hookrightarrow K( (x-a))$ 
for any $a\in K$ 
and $K(x)\hookrightarrow K( (x^{-1}))$
which are obtained by the completions of $K(x)$ with respect
to the valuations $v_{a}$ and $v_{\infty}$ respectively
(see \cite{Iw} for example).

Let  $\partial =\frac{d}{dx}$ be the natural differential operator 
on these rings.
We denote by $W[x]$, $W(x)$ and $\widehat{W}(x)$, 
the ring of differential operators 
with coefficients in $K[x]$, $K(x)$ and $K( (x))$ respectively.
Namely,
$W[x]=K[x][\partial]$, $W(x)=K(x)[\partial]$ and 
$\widehat{W}(x)=K( (x))[\partial]$.

Let $\mathcal{F}$ be one of the rings of differential operators defined above. 
The rank of $P=\sum_{i\ge 0}a_{i}(x)\partial^{i}\in \mathcal{F}$ 
is $\max\{i\mid a_{i}(x)\neq 0\}$ 
and denoted by $\mathrm{rank\,}P$.  
If in particular $\mathcal{F}=W[x]$, 
we define the degree by $\deg P=\max\{\deg a_{i}(x)\mid i=0,1,\ldots\}$.

We associate $P\in \mathcal{F}$ with 
the left $\mathcal{F}$-module $\mathcal{F}/\langle P\rangle $ where $\langle P\rangle$ is 
the left ideal of $\mathcal{F}$ generated by $P$.
If $\mathcal{F}$ is  $W(x)$ or  $\widehat{W}(x)$,
whose coefficient set  is  the field, 
then we can regard $\mathcal{F}/\langle P\rangle$ as
the finite dimensional vector space
over the coefficient field of $\mathcal{F}$ 
with $\dim \mathcal{F}/\langle P\rangle=\mathrm{rank\,}P$.
We denote $\mathcal{F}/\langle P\rangle$ by $M_{P}(x)$ 
and $\widehat{M}_{P}(x)$
if $\mathcal{F}$ is $W(x)$ and  $\widehat{W}(x)$ respectively.

For any $a\in K$ and $\infty$, we define algebra homomorphisms
\begin{align*}
    &\begin{array}{cccc}
        \phi_{a}\colon&K( (x))&\longrightarrow&K( (z))\\
        &x&\longmapsto &z+a
    \end{array},\\
    &\begin{array}{cccc}
        \phi_{\infty}\colon&K(x)&\longrightarrow&K( (z))\\
        &x&\longmapsto &z^{-1}
    \end{array}.
\end{align*}
These extend to 
\begin{align*}
    &\begin{array}{cccc}
        \phi_{a}\colon&\widehat{W}(x)&\longrightarrow&\widehat{W}(z)\\
        &x&\longmapsto &z+a\\
        &\partial_{x}=\frac{d}{dx}&\longmapsto&\partial_{z}=\frac{d}{dz}
    \end{array},\\
    &\begin{array}{cccc}
        \phi_{\infty}\colon&W(x)&
        \longrightarrow&\widehat{W}(z)\\
        &x&\longmapsto &z^{-1}\\
        &\partial_{x}&\longmapsto&-z^{2}\partial_{z}
    \end{array}.
\end{align*}
Let us fix an algebraic field extension $K( (t))$ of $K( (x))$ 
where $t^{q}=x$ $(q\in \mathbb{Z}_{>0})$.
Then the natural embedding
\[
    \begin{array}{cccc}
        r_{q}\colon &K( (x))&\longrightarrow&K( (t))\\
        &x&\longmapsto&t^{q}
    \end{array}\quad
\]
extends to
\[
    \begin{array}{cccc}
        r_{q}\colon &\widehat{W}(x)&\longrightarrow&\widehat{W}(t)\\
        &x&\longmapsto&t^{q}\\
        &\partial_{x}&\longmapsto&\frac{1}{q}t^{1-q}\partial_{t}
    \end{array}.
\]
\subsection{Singular points}
The \textit{valuation} $v$ of $K( (x))$ is defined by
\[
    v\left(\sum_{i=-\infty}^{\infty}c_{i}(x)^{m}\right)=\min\{i\mid c_{i}\neq 0\}.
\]
Here we define $v(0)=\infty$. 
We can extend $v$ to $\widehat{W}(x)$.
For $P=\sum_{i\ge 0}a_{i}(x)\partial^{i}\in \widehat{W}(x)$, 
\[
    v(P)=\min_{i\ge 0}\{v(a_{i}(x))-i\}.
\]
Set $P=\sum_{i\ge 0}\sum_{j>-\infty}c_{i,j}x^{j}\partial^{i}\in 
\widehat{W}(x)\backslash\{0\}$ 
and put $v(P)=m$. 
Then the \textit{characteristic polynomial} $ch(P)$ of $P$ is
\[
    ch(P)(s)=\sum_{i\ge 0}c_{i,m+i}s(s-1)\cdots(s-i+1).
\]
Roots of $ch(P)(s)=0$ are called \textit{characteristic exponents} of $P$. 
In particular, if $P$ satisfies 
$\deg_{K[s]}ch(P)(s)=\mathrm{rank\,}P$, 
we say $P$ is \textit{regular singular type}.

Now recall the definition of singular points of elements in $W(x)$.
For $a\in K$ and $\infty$ we can consider embeddings
$W(x)\hookrightarrow \widehat{W}(x)
\stackrel{\phi_{a}}{\rightarrow}\widehat{W}(z)$ and
$W(x)\stackrel{\phi_{\infty}}{\hookrightarrow}\widehat{W}(z)$ respectively.
We also denote these embeddings by 
the same notation 
$\phi_{a},\phi_{\infty}\colon W(x)\hookrightarrow \widehat{W}(z)$.
The \textit{singular points} of $P=\sum_{i=0}^{n}a_{i}(x)\partial^{i}\in W(x)\ (a_{n}(x)\neq 0)$ are
poles of $\frac{a_{i}(x)}{a_{n}(x)}$ for $i=0,\ldots,n-1$. 
Moreover if $z=0$ is a singular point of 
$\phi_{\infty}(P)\in W(z)\subset \widehat{W}(z)$,
then we say $P$ has a singular point at $\infty$.
Let us denote the set of singular points of $P$ 
by $S_{P}\subset K\cup \{\infty\}$.
\textit{Characteristic polynomials} and \textit{characteristic exponents} 
at singular points are defined by
\[
    ch_{a}(P)(s)=ch(\phi_{a}(P))(s) \quad (a\in S_{P})
\]
and their zeros respectively.

\begin{df}[regular singular points and irregular singular points]
    \normalfont
    Consider $P\in W(x)$ with $\mathrm{rank\,}P\ge 1$. 
    A singular point $a\in S_{P}$ is called a {\em regular singular point}
    if $\deg_{K[s]}ch_{a}(P)(s)=\mathrm{rank\,}(P).$
    Otherwise $a\in S_{P}$ is called an \textsl{irregular singular point}.
\end{df}

\subsection{Decomposition of differential operators}
Let us recall the decomposition of differential operators 
studied by Hukuhara \cite{Hu}, Turrittin \cite{T}, 
Malgrange \cite{M}, Robba \cite{R} and the other researchers.

Let us take $P=\sum_{i=0}^{n}a_{i}(x)\partial^{i}\in \widehat{W}(x)$ 
and $w\in K(x)$,
and define 
\[
\mathrm{Ad}(e^{w})P=
\sum_{i=0}^{n}a_{i}(x)\left(\partial-\frac{d}{dx}w\right)^{i}\in \widehat{W}(x).
\]
Sometimes we denote $P^{w}=\mathrm{Ad}(e^{w})P$ for short.
Moreover define $\mathrm{Ad}_{a}(e^{w})
=\phi^{-1}_{a}\circ \mathrm{Ad}(e^{w})\circ \phi_{a}
\colon W(x)\rightarrow W(x)$ for $a\in K\cup \{\infty\}$.
\begin{df}[elementary components]
    \normalfont
    Let $K( (t))$ be an algebraic extension of $K( (x))$ with $t^{q}=x$.
    Let $r_{q}\colon \widehat{W}(x)\hookrightarrow \widehat{W}(t)$ be
    the extension of the natural embedding 
    $K( (x))\hookrightarrow K( (t))$.
    For $P\in \widehat{W}(x)$ we suppose that 
    there exist $w\in t^{-1}K[t^{-1}]$
    and $Q\in \widehat{W}(t)$ of regular singular type 
    with $\mathrm{rank\,}Q\ge 1$ such that
    \begin{enumerate}
        \item $Q$ is monic, i.e., 
            $Q=\partial^{n}+a_{n-1}(x)\partial^{n-1}+\cdots +a_{0}(x)$,
            \,$a_{i}(x)\in K( (t))$ for $i=0,\ldots,n-1$,
        \item $r_{q}(P)=RQ^{w}$ for some $R\in \widehat{W}(t)$,
        \item $ch(r_{q}(P)^{-w})=ch(Q)$.
    \end{enumerate}
    Then we call the pair $(w,Q)$ an {\em elementary component} 
    of $P$ or {\em e-component} shortly and 
    the integer $q$ is called the {\em ramification index} 
    of the elementary
    component $(w,Q)$.
    In particular, if the ramification index of $(w,Q)$ is $q=1$, 
    then we say $(w,Q)$ is {\em unramified}.
\end{df}
\begin{rem}
    \normalfont
    We retain the above notation.
    If $Q'\in \widehat{W}( (t))$ satisfies conditions 2 and 3 in
    the above definition. 
    Then there exists $f\in K( (t))$ such that $Q'=fQ$ 
    $($see th\'eor\`eme 2.4 in \cite{R}$)$.
\end{rem}
\begin{df}[slopes of e-components]
    \normalfont
    Let us consider $P\in \widehat{W}(x)$ and an e-component 
    $(w,Q)$ of $P$ in $\widehat{W}(t)$, an extension of $\widehat{W}(x)$
    with $t^{q}=x$.
    Then the {\em slope} of $(w,Q)$ is defined by 
    $\lambda=\frac{\delta(w)}{q}$
    where $\delta(w)=\deg_{K[t^{-1}]}w$.
\end{df}

The following is one of the most fundamental theorem for 
the formal theory of differential operators in $\widehat{W}(x)$.
\begin{thm}[see \cite{M} and \cite{R} for example]\label{HTL}
    Consider $P\in \widehat{W}(x)$. 
    Then there exists an algebraic extension $K( (t))$ of $K( (x))$ 
    with $t^{q}=x$ such that 
    $r_{q}(P)$ decomposes as follows.

    There exist the unique set of elementary components 
    $\{(w_{1},P_{1}),\ldots,(w_{r},P_{r})\}$ of $P$ 
    in $\widehat{W}(t)$ such that  
    \begin{enumerate}
        \item $w_{i}\neq w_{j}$ if $i\neq j$,
        \item we have the $\widehat{W}(t)$-module decomposition
        $\widehat{M}_{r_{q}(P)}(t)=
        \oplus_{i=1}^{r}\widehat{M}_{P_{i}^{w_{i}}}(t).$
    \end{enumerate}
    We call $\{(w_{1},P_{1}),\ldots,(w_{r},P_{r})\}$  
    the {\em complete set of  e-components} of $P$. 
\end{thm}
If we can choose $q=1$ in the above theorem,
we say $P\in \widehat{W}(x)$ is \textsl{unramified}.

Let us recall of the Newton polygon of 
$P=\sum_{i=0}^{n}a_{i}(x)\partial^{i}\in\widehat{W}(x)$ 
introduced by Malgrange \cite{M1} and Ramis \cite{Ra}. 
Let us associate the points 
\[
(i,v(a_{i}(x))-i)\in \N\times\Z
\]
for $i$-th terms $a_{i}(x)\partial^{i}$ of $P$. 
Then the convex hull of the set
\[
    \bigcup_{i=0}^{n}\{(i-s,v(a_{i}(x))-i+t)\mid  s,t\in \R_{\ge 0}\}
    \subset \R^{2} 
\]
is called the \textit{Newton polygon} of $P$ and written by $N(P)$.

Let us see a relation between $N(P)$ and e-components of $r_{q}(P)$.
We use the same notation as in Theorem \ref{HTL}.

Let 
\[
a_{1}=(i_{1},j_{1}),\ldots,a_{l}=(i_{l},j_{l})\quad 
(0\le i_{1}<\cdots <i_{l})
\]
be the vertices of $N(P)$.
Also let  $\lambda_{k}$ be slopes of edges from $a_{k}$ 
to $a_{k+1}$ for $k=1,\ldots,l-1$, i.e.,
$\lambda_{k}=\frac{j_{k+1}-j_{k}}{i_{k+1}-i_{k}}$.
Then we can see that $\{\lambda_{k}\mid k=1,\ldots,l-1\}=
\left\{\frac{\delta(w_{i})}{q}\mid i=1,\ldots,
r\right\}$
 and 
\begin{equation}\label{NP finite}
    \begin{aligned}
i_{1}&=\begin{cases}
    0&\text{if $w_{i}\neq 0$ for all $i=1,\ldots,r$},\\
    \mathrm{rank\,} P_{\tilde{i}}&\text{if there exists }w_{\tilde{i}}=0,
\end{cases}\\
i_{k+1}-i_{k}&
=\sum_{\left\{i\,\Big|\, \frac{\delta(w_{i})}{q}=\lambda_{k}\right\}}
\mathrm{rank\,}P_{i}\quad (k=1,\ldots,l-1),\\
j_{1}&=v(P),\\
j_{k+1}-j_{k}&=\lambda_{k}\sum_{\left\{i\,\Big|\, \frac{\delta(w_{i})}{q}=
\lambda_{k}\right\}}\mathrm{rank\,}P_{i}\quad (k=1,\ldots,l-1),\\
j_{l}&=v(a_{n}(x))-n.
\end{aligned}
\end{equation}

Now we can define Newton polygons of $P\in W(x)$ at singular points. 
Namely for $a\in S_{P}$, $N_{a}(P)=N(\phi_{a}(P))$ denote 
the Newton polygon of 
$\phi_{a}(P)$.

In particular let us see $N_{\infty}(P)$ 
for $P=\sum_{i=0}^{n}a_{i}(x)\partial^{i}\in W(x)$ $(a_{n}(x)\neq 0)$.
Let $\{(w'_{i},P'_{i})\mid i=1,\ldots,r'\}$ 
be the complete set of e-components  
of $r_{q'}\circ \phi_{\infty}(P)$ 
with a suitable $q'\in \mathbb{Z}_{>0}$.
Let $\lambda'_{1}<\lambda_{2}'<\ldots<\lambda'_{l'-1}$ be 
slopes of $N_{\infty}(P)$.
Then we have
\begin{equation}\label{NP infinity}
\begin{aligned}
    i_{l'}&=n=\mathrm{rank\,}P,\\
    j_{l'}&=n-\deg_{K[x]}a_{n}(x),\\
    i_{k+1}-i_{k}&=\sum_{\left\{i\,\Big|\,
        \frac{\delta(w'_{i})}{q'}=\lambda'_{k}\right\}}
    \mathrm{rank\,}P'_{i}\quad (k=1,\ldots,l'-1),\\
    j_{k+1}-j_{k}&=\lambda'_{k}\sum_{\left\{i
        \,\Big|\, \frac{\delta(w'_{i})}{q'}
    =\lambda'_{k}\right\}}\mathrm{rank\,}P'_{i}\quad (k=1,\ldots,l'-1).
\end{aligned}
\end{equation}
Moreover if $P\in W[x]$, 
then $\deg P$ can be seen from $N_{\infty}(P)$
as follows.
Choose $\alpha\,(\in\{1,\ldots,l'-1\})$ 
so that $\lambda'_{\alpha}>1$ and $\lambda'_{\alpha-1}\le 1$.
Then we have
\begin{equation}\label{degree from NP}
    \deg P=\deg_{K[x]}a_{n}(x)
    +\sum_{s=\alpha}^{l'-1}(\lambda'_{s}-1)
    \sum_{\left\{i\,\Big|\, \frac{\delta(w'_{i})}{q'}=\lambda'_{s}
\right\}}
    \mathrm{rank\,}P'_{i}.
\end{equation}
Also we can compute $v(\phi_{\infty}(P))$ as follows,
\begin{equation}\label{weight from NP}
    \begin{split}
        v(\phi_{\infty}(P))&=j_{1}=n-\deg_{K[x]}a_{n}(x)
        -\sum_{s=1}^{l'-1}\lambda_{s}
        \sum_{\left\{i\,\Big|\, \frac{\delta(w'_{i})}{q'}
        =\lambda'_{s}\right\}}
        \mathrm{rank\,}P'_{i}\\
        &=-\deg_{K[x]}a_{n}(x)-\sum_{s=1}^{l'-1}(\lambda'_{s}-1)
        \sum_{\left\{i\,\Big|\, \frac{\delta(w'_{i})}{q'}
        =\lambda'_{s}\right\}}
        \mathrm{rank\,}P'_{i}.
\end{split}
\end{equation}

\subsection{Spectral types} 
Let $P\in \widehat{W}(x)$ be regular singular type. 
Then it is known that there exists a base $\{u_{1},\ldots,u_{n}\}$
of  $\widehat{M}_{P}(x)$ as $K( (x))$-vector space 
and $A_{P}=(a_{i,j})_{\substack{1\le i\le n\\1\le j\le n}}\in M(n,K)$
such that
$
x\partial u_{i}=\sum_{j=1}^{n}a_{i,j}u_{j}\quad (i=1,\ldots,n).
$
Here we can normalize $A_{P}$ so that distinct eigenvalues 
of $A_{P}$ do not differ by integers 
(see Lemma 5.2.5 in \cite{S} for example). 
Let us call $A_{P}$ a \textit{characteristic matrix}.

\begin{df}[semisimple e-components]
    \normalfont
    Let us consider $P\in \widehat{W}(x)$ and an e-component $(w,Q)$ 
    of $P$ in $\widehat{W}(t)$, an extension of $\widehat{W}(x)$ with
    $t^{q}=x$.
    If a characteristic matrix of $Q$ is diagonalizable, then we say
    that $(w,Q)$ is a {\em semisimple e-component} of $P$.
\end{df}
\begin{df}\label{strongly semisimple}
    \normalfont
    Let us suppose that $P\in \widehat{W}(x)$ is unramified
    and the complete set of e-components of $P$ is 
    $\{(w_{1},P_{1}),\ldots,(w_{r},P_{r})\}$.
    Then we say $P$ has the {\em strongly semisimple decomposition}
    if the following are satisfied.
    \begin{enumerate}
        \item All e-components $(w_{i},P_{i})$, $i=1,\ldots,r$,
            are semisimple.
        \item There exist  $m^{[i]}_{j}\in \Z_{>0}$ 
              and $\lambda^{[i]}_{j}\in K$ $(j=1,\ldots,s_{[i]})$ 
              such that
              $\lambda^{[i]}_{j}-\lambda^{[i]}_{j'}\notin \Z$ $(j\neq j')$
              and the characteristic exponents of $P_{i}$ 
              for $i=1,\ldots,r$
            are
              \begin{align*}
                  &\lambda^{[i]}_{1},\lambda^{[i]}_{1}+1,\ldots,
                  \lambda^{[i]}_{1}+m^{[i]}_{1}-1,\\
                  &\quad \quad \cdots,\\
                  &\lambda^{[i]}_{s_{[i]}},\lambda^{[i]}_{s_{[i]}}+1,
                  \ldots,\lambda^{[i]}_{s_{[i]}}+m^{[i]}_{s_{[i]}}-1.
                \end{align*}
    \end{enumerate}
    We call the tuple of characteristic exponents and their multiplicities,
    \[
        \left\{\left(\lambda^{[i]}_{1},\ldots,
        \lambda^{[i]}_{s_{[i]}}\right)\,;
        \,\left(m^{[i]}_{1},\ldots,m^{[i]}_{s_{[i]}}\right)\right\}
    \]
    the {\em spectrum} of $P_{i}$ for each $i=1,\ldots,r$.
    Moreover the {\em spectrum} of $P$ is the set of spectra of $P_{i}$ for 
    $i=1,\ldots,r$.

    In particular if $P$ is  regular singular type, 
    then we simply say $P$ is {\em strongly semisimple}.
\end{df}
\begin{rem}
    \normalfont
    The condition 2 in the above definition  naturally appears in classical differential equations,
    for example the equations for generalized hypergeometric functions
    (see \cite{O} for instance).
    Let us see a trivial example. Take $P\in W(x)$ with $\mathrm{rank\,}P=n$ and $a\notin S_{P}$, then 
    \[
        ch_{a}(P)(s)=\textsl{constant}\cdot s(s-1)\cdots(s-n+1).
    \]
    Thus the characteristic exponents are $0,1,\ldots,n-1$.
\end{rem}
For $P\in \widehat{W}(x)$ there exist $p_{j}(s)\in K[s]$ and we can write 
\[
    P=\sum_{j=v}^{\infty}x^{j}p_{j}(\vartheta)\quad (p_{v}\neq 0)
\]
where $\vartheta=x\partial$. 
We see that $v=v(P)$ and $ch(P)(s)=p_{v}(s)$.
The conditions in Definition \ref{strongly semisimple} can be 
reformulated as conditions on $p_{j}(s)$.

\begin{prop}\label{spectypeoshima}
    We retain the notation in Definition \ref{strongly semisimple}.  
    Let us assume $P\in \widehat{W}(x)$ is unramified. 
    Let us choose $p^{[i]}_{l}(s)\in K[s]$ so that 
    \[
        P_{i}=\sum_{l=v_{[i]}}^{\infty}x^{l}p^{[i]}_{l}(\vartheta)
        \quad \left(p^{[i]}_{v[i]}\neq 0\right).
    \]
Then the conditions in Definition  \ref{strongly semisimple} are 
equivalent to the following conditions on $p^{[i]}_{l}(s)$.
 For each  
$P_{i}$ $(i=1,\ldots,r)$, 
there exist  $m^{[i]}_{j}\in \Z_{>0}$ 
and $\lambda^{[i]}_{j}\in K$ $(j=1,\ldots,s_{[i]})$ such that
$\lambda^{[i]}_{j}-\lambda^{[i]}_{j'}\notin \Z$ $(j\neq j')$
and
\begin{align*}
    &p^{[i]}_{v_{[i]}}(\lambda^{[i]}_{j})
    =p^{[i]}_{v_{[i]}}(\lambda^{[i]}_{j}+1)=\cdots
    =p^{[i]}_{v_{[i]}}(\lambda^{[i]}_{j}+m^{[i]}_{j}-1)=0, \\
    &p^{[i]}_{v_{[i]}+1}(\lambda^{[i]}_{j})
    =p^{[i]}_{v_{[i]}+1}(\lambda^{[i]}_{j}+1)=\cdots
    =p^{[i]}_{v_{[i]}+1}(\lambda^{[i]}_{j}+m^{[i]}_{j}-2)=0,\\
    &\cdots\\
    &p^{[i]}_{v_{[i]}+m^{[i]}_{j}-1}(\lambda^{[i]}_{j})=0
\end{align*}
for all $j=1,\ldots,s_{[i]}$.
\end{prop}
\begin{proof}
    This follows from Proposition 6.14 in \cite{O}.
\end{proof}
\begin{rem}\label{regular point}
    \normalfont
    Let us consider $P\in W(x)$ with $\mathrm{rank\,}P=n$ and take $a\in K\cup \{\infty\}$. 
    Then it can be seen that  $a\notin S_{P}$ if and only if
    $\phi_{a}(P)$ is regular singular type and 
    strongly semisimple with the spectrum
    $\{(0)\,;\,(n)\}$.
    We shall show this fact in the subsection \ref{primitive comp}.
\end{rem}
In this paper we mainly investigate special 
differential operators in $W(x)$ satisfying the following assumption.

\begin{ass}\label{spec}
    Let us consider $P\in W(x)$ with the set of singular points $S_{P}$. 
    Then for each $a\in S_{P}$ we assume that
    \begin{enumerate}
        \item $\phi_{a}(P)\in \widehat{W}(z)$ is unramified,
        \item $\phi_{a}(P)$ has the strongly semisimple decomposition.
    \end{enumerate}
       
\end{ass}
\begin{df}[spectral types]
    \normalfont
    Let us suppose that $P\in W(x)$ satisfies 
    \textsl{Assumption \ref{spec}} and 
    put 
    $\overline{S}_{P}=S_{P}\cup \{\infty\}=\{a_{0},a_{1},\ldots,a_{p}\}$
    where $a_{1},\ldots,a_{p}\in K$ and $a_{0}=\infty$.
    For each $a_{i}$ $(i=0,\ldots,p)$, let   
    $\left\{\left(w^{[i]}_1,P^{[i]}_{1}\right),\ldots,
    \left(w^{[i]}_{r_{i}},P^{[i]}_{r_{i}}\right)\right\}$
    be the complete set of e-components of $\phi_{a_{i}}(P)$.
    Let 
    \[
        \left\{\left(\lambda^{[i,j]}_{1},\ldots,
        \lambda^{[i,j]}_{s_{[i,j]}}\right)\,
        ;\,\left(m^{[i,j]}_{1},\ldots,
        m^{[i,j]}_{s_{[i,j]}}\right)\right\}\]
        be the spectrum of $P^{[i]}_{j}$ for each
        $i=0,\ldots,p$ and 
        $j=1,\ldots,r_{i}$.
        Then we say that $P$ has the {\em spectrum}
    \[
        \left\{\left(\lambda^{[i,j]}_{1},\ldots,
        \lambda^{[i,j]}_{s_{j}}\right)\,
        ;\,\left(m^{[i,j]}_{1},\ldots,
        m^{[i,j]}_{s_{[i,j]}}\right)\right\}
        _{\substack{0\le i\le p\\1\le j\le r_{i} }}.
    \]
    In particular, putting 
    $\mathbf{m}_{[i,j]}=\left(m^{[i,j]}_{1},\ldots,
    m^{[i,j]}_{s_{[i,j]}}\right)$, 
    we call 
    $(\mathbf{m}_{[i,j]})_{\substack{0\le i\le p\\1\le j\le r_{i}}}$
    the {\em spectral type} of $P$.
\end{df}

\section{Algebraic transformations and local data}
In this section, we recall some transformations on $W[x]$ and  $W(x)$, 
and investigate how the spectra are changed by these transformations.
\subsection{Addition and Fourier-Laplace transform}\label{add lap}
For $\mu\in K$ we  define an automorphism of $\widehat{W}(x)$ by 
\[
    \begin{array}{cccc}
        \mathrm{Add}^{\mu}\colon&\widehat{W}(x)&\longrightarrow&\widehat{W}(x)\\
        &x&\longmapsto&x\\
        &\partial&\longmapsto&\partial-\displaystyle\frac{\mu}{x}
    \end{array},
\]
and call this automorphism the \textit{addition}. 
Moreover define 
$\mathrm{Add}_{a}^{\mu}=
\phi_{a}^{-1}\circ \mathrm{Add}^{\mu}\circ \phi_{a}
\colon \widehat{W}(x)\rightarrow \widehat{W}(x)$ 
and call this the \textit{addition} at $x=a$ for $a\in K$.

\begin{lem}\label{shift}
    Let us consider an unramified $P\in\widehat{W}(x)$ 
    with the complete set of e-components   
    $\{(w_{1},P_{1})\ldots,(w_{r},P_{r})\}$.

   \begin{enumerate}
        \item For $g(x)\in K[ [x]]$, define an algebra automorphism 
            $j_{g}$ of 
            $\widehat{W}(x)$ by sending $x\mapsto x$ 
            and $\partial\mapsto \partial -g(x)$.
            Then $ch(j_{g}(P_{i}))=ch(P_{i})$ 
            and thus $j_{g}(P_{i})$ are  regular 
            singular type. 
            
            Moreover $j_{g}$ preserves characteristic matrices.
            Namely, we can choose the same characteristic matrices 
            of  $j_{g}(P_{i})$
            as them of $P_{i}$ for $i=1,\ldots,r$.

            In particular 
            $\left\{\left(w_{1},j_{g}(P_{1})\right),\ldots,
            \left(w_{r},j_{g}(P_{r})\right)\right\}$
            is the complete set of e-components of $j_{g}(P)$.
        \item
            For any $\mu\in K$,
            $\{(w_{1},\mathrm{Add}^{\mu}(P_{1})),\ldots,
            (w_{r},\mathrm{Add}^{\mu}(P_{r}))\}$ 
            is the complete set of e-components of 
            $\mathrm{Add}^{\mu}(P)$. 
     
            Moreover suppose that $P$ has the strongly 
            semisimple decomposition 
            with the spectrum 
            $\left\{\left(\lambda^{[j]}_{1},\ldots,
            \lambda^{[j]}_{s_{[j]}}\right)
            \,;\,
            \left(m^{[j]}_{1},\ldots,
            m^{[j]}_{s_{[j]}}\right)\right\}_{1\le j\le r}$.
            Then $\mathrm{Add}^{\mu}(P)$  has 
            the strongly semisimple decomposition with
            the spectrum 
            \[
                \left\{\left(\lambda^{[j]}_{1}+\mu,\ldots,
                \lambda^{[j]}_{s_{[j]}}+\mu\right)
            \,;\,
            \left(m^{[j]}_{1},\ldots,
            m^{[j]}_{s_{[j]}}\right)\right\}_{1\le j\le r}. 
            \]
    \end{enumerate}
\end{lem}
\begin{proof}
    First we note that $j_{g}(P_{i}^{w_{i}})=j_{g}(P_{i})^{w_{i}}$.
    Let us choose $p_{j}(s),p'_{j}(s)\in K[s]$ so that  
    $P_{i}=\sum_{j=v}^{\infty}x^{j}p_{j}(\vartheta)$ and 
    $j_{g}(P_{i})=\sum_{j=v'}^{\infty}x^{j}p'_{j}(\vartheta)$.
    Here $p_{v}(s)\neq 0$ and $p'_{v'}(s)\neq 0$. 
    Then $v(g(x))\ge 0$ implies that $v=v'$ and $p_{v}(s)=p'_{v'}(s)$.
    Thus $ch(P_{i})=ch(j_{g}(P_{i}))$.

    Next we examine characteristic matrices.
    Define a new $\partial$-action on $\widehat{M}_{P_{i}}$ by
    $\partial\circ m=(\partial+g(x))m$ for $m\in \widehat{M}_{P_{i}}$
    and denote this new $\widehat{W}(x)$-module 
    by $\widehat{M}_{P_{i}}^{g}$.
    Then $\widehat{M}_{P_{i}}^{g}\cong \widehat{M}_{j_{g}(P_{i})}(x)$.
    Let $A_{i}$ be a characteristic matrix of $P_{i}$ 
    with respect to a suitably chosen basis
    of $\widehat{M}_{P_{i}}$.  
    Then 
    \[
        \partial \circ m=(A_{i}+g(x)I)m
        \quad (m\in \widehat{M}^{g}_{P_{i}}).
    \]
    Here in the RHS we regard $m$ as the column vector with respect 
    to the basis
    and $I$ is the identity matrix. 
    Since we may choose $A_{i}$ so that 
    their eigenvalues do not differ by integers, 
    there exists 
    another basis of $\widehat{W}_{P_{i}}^{g}$ such that 
    \[
        \partial\circ m=A_{i}m \quad(m\in \widehat{M}^{g}_{P_{i}})
    \]
    (see Theorem 5.2.2 in \cite{S} for example).
    Here in the RHS we regard $m$ as the column vector with respect to 
    the new basis.
    This proves 1.

    Let us consider 2.  
    We see that $\mathrm{Add}^{\mu}(P_{i})\in \widehat{W}(x)$ is 
    regular singular type.
    Indeed if $P_{i}=\sum_{j=v}^{\infty}x^{j}p_{j}(\vartheta)$ 
    with polynomials $p_{j}(s)\in K[s]$ and $p_{v}(s)\neq 0$, 
    then $\deg_{K[t]}p_{v}(s)=\mathrm{rank\,} P$ 
    since $P$ is regular singular type.
    Then putting $p^{\mu}_{j}(s)=p_{j}(s-\mu)$, we see that
    \begin{equation}\label{shifting}
        \mathrm{Add}^{\mu}(P_{i})
        =\sum_{j=v}^{\infty}x^{j}p^{\mu}_{j}(\vartheta)
    \end{equation}
    and $\deg_{K[t]}p_{v}^{\mu}(s)=\deg_{K[s]}p_{v}(s)=\mathrm{rank\,}P
    =\mathrm{rank\,}\mathrm{Add}^{\mu}(P)$. 
    Thus $\mathrm{Add}^{\mu}(P_{i})$ is regular singular type.
    Moreover the last assertion follows from 
    the equation $(\ref{shifting})$ and Proposition \ref{spectypeoshima}.
\end{proof}

\begin{prop}\label{shiftII}
    Suppose that $P\in W(x)$ satisfies Assumption \ref{spec}. 
    Put $\overline{S}_{P}=\{a_{0},a_{1},\ldots,a_{p}\}$ 
    where $a_{0}=\infty$ and $a_{1},\ldots,a_{p}\in K$. 
    Let us denote the complete sets of e-components of $\phi_{a_{i}}(P)$ 
    by $\left\{\left(w^{[i]}_{1},P^{[i]}_{1}\right),\ldots,
    \left(w^{[i]}_{r_{i}},P^{[i]}_{r_{i}}\right)\right\}$
    and denote the spectra of $P^{[i]}_{j}$ by 
    $\left\{\left(\lambda^{[i,j]}_{1},\ldots,
    \lambda^{[i,j]}_{r_{i}}\right)\,;\,
    \left(m^{[i,j]}_{1},\ldots,
    m^{[i,j]}_{r_{i}}\right)\right\}$ 
    for $i=0,\ldots,p,\,j=1,\ldots,r_{i}$.
    Now take $\mu\in K$ and $a_{i_{0}}\in S_{P}$,
    and consider the spectrum of $\mathrm{Add}_{a_{i_{0}}}^{\mu}(P)$.

    Then we have $\overline{S}_{\mathrm{Add}_{a_{i_{0}}}^{\mu}(P)}
    \subset\overline{S}_{P}$. 
    For $a_{i}\in \overline{S}_{\mathrm{Add}_{a_{i_{0}}}^{\mu}(P)}$, 
    the complete sets of e-components of 
    $\phi_{a_{i}}\circ\mathrm{Add}_{a_{i_{0}}}^{\mu}(P)$
    are 
    $\left\{\left(w^{[i]}_{1},\tilde{P}^{[i]}_{1}\right),
    \ldots,\left(w^{[i]}_{r_{i}},
    \tilde{P}^{[i]}_{r_{i}}\right)\right\}$
    where $\tilde{P}^{[i]}_{j}$ have the spectra,
    \[
        \begin{cases}
            \left\{\left(\lambda^{[i,j]}_{1},\ldots,
            \lambda^{[i,j]}_{s_{[i,j]}}\right)\,
            ;\,\left(m^{[i,j]}_{1},\ldots,
            m^{[i,j]}_{s_{[i,j]}}\right)\right\}
            &\text{if }i\neq i_{0} \text{ and } i\neq 0,\\ 
            \left\{\left(\lambda^{[i_{0},j]}_{1}+\mu,\ldots,
            \lambda^{[i_{0},j]}_{s_{[i_{0},j]}}+\mu\right)\,
            ;\,\left(m^{[i_{0},j]}_{1},\ldots,
            m^{[i_{0},j]}_{s_{[i_{0},j]}}\right)\right\}
            &\text{if }i=i_{0},\\
            \left\{\left(\lambda^{[0,j]}_{1}-\mu,\ldots,
            \lambda^{[0,j]}_{s_{[0,j]}}-\mu\right)\,
            ;\,\left(m^{[0,j]}_{1},\ldots,
            m^{[0,j]}_{s_{[0,j]}}\right)\right\}
            &\text{if }i=0.
        \end{cases}
    \]
\end{prop}
\begin{proof}
     Note that if $a\neq a'$, 
     $\frac{\mu}{x-a}=\frac{\mu}{a'-a}+c_{1}(x-a')+c_{2}(x-a')^{2}+\cdots\in K[ [x-a']]$ where 
     $c_{i}=\frac{1}{i!}\frac{d^{i}}{dx^{i}}\frac{\mu}{x-a}|_{z=a'}$.
     Also note that putting  $z=x^{-1}$, 
     we see $\frac{d}{dx}-\frac{\mu}{x-a}=-z^{2}\frac{d}{dz}-\mu zg(z)$ 
     with some $g(z)\in K[ [z]]$ satisfying $g(0)=1$.
     As we note in Remark \ref{regular point} 
     if $a\notin  S_{P}$, then $\phi_{a}(P)$ 
     is regular singular type and strongly semisimple with the spectrum
     $\{(0)\,;\,(n)\}$ where $n=\mathrm{rank\,}P$. 
     Then by Lemma \ref{shift} 
     we can see that $\phi_{a}(\mathrm{Add}_{a_{i_{0}}}^{\mu})(P)$
     is regular singular type and strongly semisimple with the spectrum
     $\{(0)\,;\,(n)\}$.
     Thus $a\notin S_{\mathrm{Add}_{a_{i_{0}}}^{\mu}(P)}$ which shows 
     that $\overline{S}_{\mathrm{Add}_{a_{i_{0}}}^{\mu}(P)}
    \subset\overline{S}_{P}$.
    The other spectra of $\mathrm{Add}_{a_{i_{0}}}^{\mu}(P)$ can be 
    computed by Lemma \ref{shift}.
\end{proof}
      
\begin{df}[Fourier-Laplace transform]
    \normalfont
    The Fourier-Laplace transform is 
    the $K$-algebra automorphism of $W[x]$,
    \[
    \begin{array}{cccc}
        \mathcal{L}\colon&W[x]&\longrightarrow &W[x]\\
        &x&\longmapsto &-\partial\\
        &\partial&\longmapsto&x
    \end{array}.
    \]
\end{df}

We recall how spectra are changed by 
the Fourier-Laplace transform
following the results of J. Fang \cite{F} and C. Sabbah \cite{S1}.
\begin{prop}\label{finite to infinity}
    Let us consider $P\in W[x]$ and fix 
    $a\in S_{P}\backslash\{\infty\}\subset K$.
    Suppose that $\phi_{a}(P)\in \widehat{W}(z)$ is unramified
    and has the strongly semisimple decomposition.
    Moreover suppose that $\phi_{a}(P)$ has an e-component 
    $(w,Q)$  
    with $\delta(w)=n>0$ and the spectrum
    \[
    \{(\lambda_{1},\ldots,\lambda_{s})\,;
    \,(m_{1},\ldots,m_s)\}.
    \]
             
    Then there exist $\alpha_{1},\ldots,\alpha_{n+1}\in K$ 
    and distinct polynomials
    $g_{1}(x),\ldots, g_{n+1}(x)\in xK[x]$ of 
    $\deg g_{i}(x)=n$
    such that   
    $\phi_{\infty}(\mathcal{L}(P))\in\widehat{W}(z)$ 
    has the following e-components 
    $(v_{1},R_{1}),\ldots,(v_{n+1},R_{n+1})$ 
    in $\widehat{W}(t)$, 
    an extension of $\widehat{W}(z)$ with $t^{n+1}=z$. 
    \begin{enumerate}
    \item Polynomials $v_{i}$ are                
    \[
    v_{i}(t)=-at^{-n-1}+g_{i}(t^{-1}),\,
    i=1,\ldots,n+1.
    \]
\item We have $R_{i}$ are strongly semisimple with the spectra
    \[
        \{(\lambda_{1}+\alpha_{i},\ldots,\lambda_{s}+\alpha_{i})\,;\,
        (m_{1},\ldots,m_{s})\}
    \]
    for $i=1,\ldots,n+1$.
    \end{enumerate}
    Here $g_{i}(x)$ and $\alpha_{i}$ depend only on $w$.
\end{prop}
\begin{proof}
    Since the Fourier-Laplace transform sends the translation
    $x\mapsto x-a$  
    to $\partial\mapsto \partial+a$,
    it suffices to consider the case of $a=0$.

    Since we assume that $P$ has 
    the strongly semisimple decomposition, 
    Theorem 1.1 and 1.2 of J. Fang \cite{F}, Theorem 5.1 
     of C. Sabbah \cite{S1} and 
     Theorem in the section 1 of R. Garcia-Lopez  \cite{G} assure that
     any e-component of $\phi_{\infty}\circ\mathcal{L}(P)$
     whose slope $\lambda$ is $0<\lambda <1$ is semisimple.
    Thus we need to see the explicit characteristic exponents of 
    e-components of $\mathcal{L}(P)$.

 Set $P^{-w}=\sum_{i=0}^{N}a_{i}(x)(x^{n+1}\partial)^{i}$, then 
    \[
        P=\sum_{i=0}^{N}a_{i}(x)\left(x^{n+1}\partial-x^{n+1}
        \frac{d}{dx}w\right)^{i}.
    \]
    Putting $\tilde{w}(x)=x^{n+1}w=\sum_{i=0}^{n-1}w_{i}x^{i}$,
    we have
    \[
    \mathcal{L}(P)=
    \sum_{i=0}^{N}a_{i}(-\partial)( (-\partial)^{n+1}x
    -\tilde{w}(-\partial))^{i}\in W[x].
    \]
    Here we notice that $a_{i}(-\partial)$ are elements 
    in the ring of formal microlocal differential operators, 
    $\{\sum_{i\ge r}b_{i}(x)\partial^{-i}\mid b_{i}
    \in K[ [ x] ], r\in \Z\}$ (see \cite{G} for example).  
     
     We shall show that there exist polynomials 
     $g_{1},\ldots,g_{n+1}\in sK[x]$ 
     and $\alpha_{1},\ldots,\alpha_{n+1}\in K$ 
     such that we have
     \begin{equation}\label{localf}
         ch\left( (r_{n+1}\circ\phi_{\infty}\circ
         \mathcal{L}(P))^{-g_{i}(t^{-1})}\right)(s)
         =ch(P^{-w})(s-\alpha_{i}).
     \end{equation}
     This shows that
     there exist $R_{i}\in \widehat{W}(t)$, 
     the extension of $\widehat{W}( z)$ with $t^{n+1}=z$, 
     such that $(g_{i}(t^{-1}),R_{i})$ $(i=1,\ldots,n+1)$
     are e-components of $\phi_{\infty}\circ\mathcal{L}(P)$ 
     and 
     \begin{equation}\label{stsem}
         ch(R_{i})(s)=ch(P^{-w})(s-\alpha_{i})=ch(Q)(s-\alpha).
     \end{equation}
     
     Now let us show $(\ref{localf})$ and $(\ref{stsem})$. 
     Since 
    \[
        r_{n+1}\circ
        \phi_{\infty}\left((-\partial)^{n+1}x-\tilde{w}(-\partial) \right)
    = \left(\frac{1}{n+1}t^{n+2}\partial_{t}\right)^{n+1}t^{-(n+1)}
    -\tilde{w}\left(\frac{1}{n+1}t^{n+2}\partial_{t}\right),
    \]
    we put 
    $M(t,\partial_{t})=
    \left(\frac{1}{n+1}t^{n+2}\partial_{t}\right)^{n+1}t^{-(n+1)}
    -\tilde{w}\left(\frac{1}{n+1}t^{n+2}\partial_{t}\right)$.
    Then we have the following.
 \begin{lem}\label{uedesu}
    Let us retain the above notation.
    There exist $n+1$ polynomials 
    $h_{i}(x)=\sum_{j=1}^{n}h_{i,j}x^{j+1}$ $(i=1,\ldots,n+1)$ 
    and $\alpha_{1},\ldots,\alpha_{n+1}\in K$ 
    and we have 
    \begin{align}
        &M(x,\partial+h_{i}(x^{-1}))
        =\left(\frac{h_{i,n}}{n+1}\right)^{n}x^{n+1}
        \partial-\alpha_{i}x^{n}+S_{i}\\
        &-\frac{1}{n+1}x^{n+2}(\partial+h_{i}(x^{-1}))
        =\frac{h_{i,n}}{n+1}x+T_{i}.
    \end{align}
    Here $S_{i},T_{i}\in \widehat{W}(x)$ 
    with $v(S_{i})>n$ and $v(T_{i})>1$.
 \end{lem}
 \begin{proof}[proof of Lemma \ref{uedesu}]
     Let us put $h(x)=\sum_{i=1}^{n}h_{i}x^{i+1}\in K[x]$, then
     \[
         M(x,\partial+h(x^{-1}))
         =\left(\frac{1}{n+1}x^{n+2}\partial+\tilde{h}(x)
         \right)^{n+1}x^{-n-1}
         -\tilde{w}\left(\frac{1}{n+1}x^{n+2}\partial+\tilde{h}(x)\right).
         \]
     Here $\tilde{h}(x)=\frac{1}{n+1}x^{n+2}h(x^{-1})
     =\sum_{i=1}^{n}\tilde{h}_{i}x^{i}\in K[x]$.
     Since $v(x^{n+2}\partial)=n+1$ and $v(\tilde{h}(x))= 1$, 
     if  we put \[
     N_{i}(x,\partial)
     =\left(\frac{1}{n+1}x^{n+2}\partial+\tilde{h}(x)\right)^{i}
     -\tilde{h}(x)^{i}
     \]
     for $i=1,\ldots,n-1$, then $v_{0}(N_{i})=n+i$. Also putting  
     \[
     N_{n}(x,\partial)
     =\left(\frac{1}{n+1}x^{n+2}\partial+\tilde{h}(x)\right)^{n+1}
     x^{-n-1}-x^{-n-1}(\tilde{h}(x))^{n+1},
     \]
     we have $v_{0}(N_{n})=n$. 
     Then we have
     \[
         M(x,\partial+h(x^{-1}))=
         N_{n}-\sum_{i=1}^{n-1}w_{i}N_{i}-w_{0}
         +x^{-n-1}(\tilde{h}(x))^{n+1}
         -\sum_{i=1}^{n-1}w_{i}\, (\tilde{h}(x))^{i}.
     \]
     Let us put $(\tilde{h}(x))^{i}=\sum_{j=i}^{ni}H^{(i)}_{j}x^{j}$ 
     for $i=1,\ldots,n+1$. 
     Then we can see that $H^{(i)}_{i}=(\tilde{h}_{1})^{i}$ 
     and $H^{(i)}_{i+k}$ are polynomials of 
     $\tilde{h}_{1}, \ldots,\tilde{h}_{k}$ for $k=1,\ldots,n-1$. 
     
     Then let us choose $\tilde{h}_{i}\, (i=1,\ldots,n)$
     so that the following equations are satisfied,
         \begin{equation}
         \begin{split}
             &H^{(n+1)}_{n+1}-w_{0}=0\\
             &H^{(n+1)}_{n+2}-w_{1}H^{(1)}_{1}=0\\
         &\cdots\\
         &H^{(n+1)}_{n+1+j}-w_{1}H^{(1)}_{j}-\cdots -w_{j}H^{(j)}_{j}=0
         \ (j\le n-1)
     \end{split}.
     \end{equation}
     Then
     \[
         v\left(-w_{0}+x^{-n-1}(\tilde{h}(x))^{n+1}
         -\sum_{i=1}^{n-1}w_{i}\,(\tilde{h}(x))^{i}\right)\ge n,
     \]
     namely 
     \[
         -w_{0}+x^{-n-1}(\tilde{h}(x))^{n+1}
         -\sum_{i=1}^{n-1}w_{i}(\tilde{h}(x))^{i}
         =c_{0} x^{n}+c_{1}x^{n+1}+c_{2}x^{n+2}+\cdots.
     \]
     We note that the equation $(\tilde{h}_{n})^{n+1}-w_{0}=0$ 
     has $n+1$ solutions in $K$, and 
     if we fix a solution $\tilde{h}_{n}$, 
     remaining $\tilde{h}_{n-1},\ldots,\tilde{h}_{1}$ 
     are uniquely determined by the other equations.
 
     Thus we have
     \[
         M(x,\partial+h(x^{-1}))
         =(h_{n})^{n}x^{n+1}\partial-c_{0}x^{n}+M'(x,\partial)
     \]
     where $v(M'(x,\partial))\ge n+1$. 
     The other equation can be obtained similarly.
     \end{proof}
     Since
         $r_{n+1}\circ\phi_{\infty}\circ\mathcal{L}(P)
     =\sum_{i=0}^{N}a_{i}\left(\frac{1}{n+1}t^{n+2}\partial_{t}\right) 
     M(t,\partial_{t})^{i}
     $,                             
     let us choose $h_{k}$ as in Lemma \ref{uedesu} and $g_{k}$ 
     so that $\frac{d}{dt}(g_{k}(t^{-1}))=h_{k}(t^{-1})$
     for $k=1,\ldots,n+1$.
     Then putting $t_{k}=\frac{h_{k,n}}{n+1}t$,
     we have 
     $r_{n+1}\circ\phi_{\infty}\circ
     \mathcal{L}(P)^{-g_{k}(t^{-1})}=
     \sum_{i=0}^{N}a_{i}(t_{k}+T_{k})(t_{k}^{n+1}\partial_{t_{k}}
     -\alpha'_{k}t_{k}^{n}+S_{k})^{i}.$ 
     Here $\alpha'_{k}=\alpha_{k}\left(\frac{h_{k,n}}{n+1}\right)^{-n}$.
     Then recalling that 
     $P^{-w}=\sum_{i=0}^{N}a_{i}(x)(x^{n+1}\partial)^{i}$,
     we have
     $
         ch(\mathcal{L}(P)^{-g_{k}(t^{-1})})(t)
         =ch(P^{-w})(t-\alpha'_{k}).
     $
     Here $\alpha'_{k}=\alpha_{k}\left(\frac{h_{k,n}}{n+1}\right)^{-1}$.

 \end{proof}
 \begin{prop}\label{infinity to infinity}
     Let us consider $P\in W[x]$ and suppose that  
     $\phi_{\infty}(P)\in \widehat{W}(z)$ is unramified and 
     has the strongly semisimple decomposition. Moreover suppose that 
     $\phi_{\infty}(P)$ has an e-component
     $(w,Q)$
     with $\delta(w)=n\ge 2$ and  the spectrum 
     $$\{(\lambda_{1},\ldots,\lambda_{s})\,
     ;\,(m_{1},\ldots,m_s)\}.$$
     Set $w(z)=w_{0}z^{-n-1}+w_{1}z^{-n}+\ldots +w_{n-1}z^{-2}
     \ (w_{0}\neq 0)$  where $n\ge 2$. 
             
     Then there exist $\alpha_{1},\ldots,\alpha_{n-1}\in K$ 
     and distinct 
     $g_{1}(x),\ldots, g_{n-1}(x)\in xK[x]$ of 
     $\deg g_{i}(x)=n$
     such that $\phi_{\infty}(\mathcal{L}(P))$ has 
     following e-components $(v_{1},R_{1}),\ldots,(v_{n-1},R_{n-1})$ 
     in $\widehat{W}(t)$, the extension of $\widehat{W}(z)$ with 
     $t^{n-1}=z$.

     \begin{enumerate}
                 \item We have $R_{i}$ are of strongly 
                     semisimple with the spectra 
                     \[
                         \{(\lambda_1+\alpha_{i},\ldots,\lambda_{s}
                         +\alpha_{i})\,;\,(m_{1},\ldots,m_{s})\}.
                     \] 
                     for $i=1,\ldots,n-1$.

                 \item Polynomials $v_{i}$ are                  
                             \[
                                 v_{i}(t)=g_{i}(t^{-1}),\,i=1,\ldots,n-1.
                             \]
     \end{enumerate}
               Here $g_{i}(x)$ and $\alpha_{i}$ depend only on $w$.

 \end{prop}
 This proposition can be shown by the same argument 
 as in Proposition \ref{finite to infinity}. 
 Also we can show inversion formulas of these propositions.
 \subsection{Primitive component}\label{primitive comp}
 Elements in $W(x)$ can be seen as elements in $W[x]$ by 
 multiplying suitable elements in $K(x)$ from the left. 
 However $P\in W[x]$ and $f(x)P$ for some $f(x)\in K[x]$ have
 slightly different structures if we consider the Laplace transform images.
 For example, $\mathrm{rank\,}\mathcal{L}(P)\neq 
 \mathrm{rank\,}\mathcal{L}(f(x)P)$ and their local spectra are
 mutually different in general.
 Hence we shall give a way to choose a minimal, in a sense,
 element in $W[x]$
 from an element in $W(x)$.
 \begin{lem}\label{weight}
     Let us consider 
     \[
         P=\sum_{i=r}^{\infty}x^{i}p_{i}(\vartheta)\in 
         \widehat{W}[x]
         =K[ [x]][\partial]\ (p_{i}(s)\in K[s],\,p_{r}(s)\neq 0).
     \]
     Then $x^{-s}P$ is still in $\widehat{W}[x]$ 
     $(s\in \mathbb{Z}_{\ge 0})$
     if and only if $r-s\ge 0$ or 
     the following equations are satisfied for $m=s-r$,
     \begin{equation}\label{equ1}
     \begin{split}
         &p_{r}(0)=p_{r}(1)=\cdots=p_{r}(m-1)=0,\\
         &p_{r+1}(0)=p_{r+1}(1)=\cdots =p_{r+1}(m-2)=0,\\
         &\cdots\\
         &p_{r+m-1}(0)=0.
     \end{split}
     \end{equation}
 \end{lem}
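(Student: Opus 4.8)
The plan is to put $(x-c)^{-s}P$ into the Euler--operator normal form and read off when the negative powers of $x-c$ survive. Since powers of $x-c$ commute with each other,
\[
(x-c)^{-s}P=\sum_{i=r}^{N}(x-c)^{i-s}p_{i}(\vartheta_{c});
\]
the summands with $i\ge s$ already lie in $W[x]$, so $(x-c)^{-s}P\in W[x]$ if and only if $\sum_{r\le i\le s-1}(x-c)^{i-s}p_{i}(\vartheta_{c})\in W[x]$. If $r-s\ge 0$ this sum is empty and there is nothing to prove, so I may assume $m=s-r\ge 1$.

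Next I would use the elementary identity $\vartheta_{c}(\vartheta_{c}-1)\cdots(\vartheta_{c}-b+1)=(x-c)^{b}\partial^{b}$ (immediate induction from $\partial(x-c)=(x-c)\partial+1$). Expanding a polynomial $p(t)$ in the falling-factorial basis $[t]_{b}=t(t-1)\cdots(t-b+1)$ gives, for any integer $j$,
\[
(x-c)^{j}p(\vartheta_{c})=\sum_{b}c_{b}\,(x-c)^{j+b}\partial^{b}\quad\text{where}\quad p(t)=\sum_{b}c_{b}[t]_{b}.
\]
From this I extract the two facts I need: (i) $(x-c)^{j}p(\vartheta_{c})$ is homogeneous of weight $j$ for $\mathrm{wt}_{c}$, every monomial $(x-c)^{a}\partial^{b}$ in it having $a-b=j$; and (ii) for $j<0$ it lies in $W[x]$ exactly when $c_{b}=0$ for all $b<-j$, that is, when the falling factorial $[t]_{-j}$ divides $p(t)$, equivalently $p(0)=p(1)=\cdots=p(-j-1)=0$.

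Now the $\mathrm{wt}_{c}$-grading of $W[x]$, and of its localization $K[x,(x-c)^{-1}][\partial]$, finishes the argument: the summands $(x-c)^{i-s}p_{i}(\vartheta_{c})$ for $r\le i\le s-1$ are homogeneous of pairwise distinct (negative) weights $i-s$, so they cannot cancel one another, and the whole sum lies in $W[x]$ if and only if each summand does. Applying (ii) with $j=i-s$, so that $-j-1=s-i-1$, this is precisely the system $p_{i}(0)=\cdots=p_{i}(s-i-1)=0$ for $r\le i\le s-1$; writing $i=r,r+1,\dots,r+m-1$ turns it into the triangular list $(\ref{equ1})$, with the convention $p_{j}=0$ for $j>N$ absorbing the case $s-1>N$.

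The step I expect to be the only real obstacle is the non-cancellation claim in the last paragraph: individually the terms $(x-c)^{i-s}p_{i}(\vartheta_{c})$ have genuine poles at $x=c$, so one must rule out that some combination of them is regular. This is exactly what the weight grading takes care of, since the terms sit in different graded pieces; once that is granted the lemma reduces to the commutation calculus of $\vartheta_{c}$ and to the observation that divisibility by $[t]_{-j}$ is detected by the vanishing of $p$ at $0,1,\dots,-j-1$.
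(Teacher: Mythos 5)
Your proof is correct and follows essentially the same route as the paper's: both rest on the identity $\vartheta_{c}(\vartheta_{c}-1)\cdots(\vartheta_{c}-b+1)=(x-c)^{b}\partial^{b}$ (equivalently, expansion in the falling-factorial basis) together with the observation that the summands $(x-c)^{i-s}p_{i}(\vartheta_{c})$ are $\mathrm{wt}_{c}$-homogeneous of distinct weights, so membership in $W[x]$ can be tested term by term. Your write-up is, if anything, a cleaner account of the converse direction than the paper's.
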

 \begin{proof} 
     If equations $(\ref{equ1})$ are satisfied, we have
     \begin{align*}
         x^{r+i}p_{r+i}(\vartheta)&
         =x^{r+i}\vartheta(\vartheta-1)
         \cdots(\vartheta-m+i+1)\tilde{p}_{r+i}(\vartheta)\\
         &=x^{r+i}x^{m-i}\partial^{m-i}\tilde{p}_{r+i}(\vartheta)
         =x^{r+m}\partial^{m-i}\tilde{p}_{r+i}(\vartheta)
     \end{align*}
    for $i=0,1,\ldots,m-1$ 
    where $\tilde{p}_{r+i}\in K[x]$. 
    Thus $x^{-(r+m)}P\in \widehat{W}[x]$. 
 
    Conversely  
    suppose that $x^{-s}P\in \widehat{W}[x]$. 
    Then $x^{-s}P=\sum_{i=0}^{\infty}x^{i-m}p_{r+i}(\vartheta)$. 
    Since $v(x^{i-m}p_{r+i}(\vartheta))=i-m$, 
     they are linear combinations of 
     $x^{\alpha}\partial^{\alpha+m-i}$ $(\alpha\ge 0)$ for $i=0,\ldots,m$. 
     Recalling that 
     $$x^{\alpha}\partial^{\alpha+m-i}=\vartheta(\vartheta-1)
     \cdots(\vartheta-\alpha+1)\partial^{m-i}$$ 
     for $i=0,1,\ldots,m$, 
     we have
     \begin{align*}
         x^{i-m}p_{r+i}(\vartheta)&
         =\bar{p}_{r+i}(\vartheta)\partial^{m-i}\\
         &=\partial^{m-i}\bar{p}_{r+i}(\vartheta-m+i)\\
         &=x^{i-m}\vartheta(\vartheta-1)
         \cdots(\vartheta-m+i+1)\bar{p}_{r+i}(\vartheta-m+i)
     \end{align*}
     for $i=0,1,\ldots,m$. 
     Here $\bar{p}_{r+1}\in K[x]$. 
     Thus we have equations $(\ref{equ1})$.
 \end{proof}
 As a corollary of this lemma we show the fact in 
 Remark \ref{regular point}.
 \begin{prop}
 Let us consider $P\in W(x)$ with $\mathrm{rank\,}P=n$ and take $a\in K\cup \{\infty\}$. 
    Then $a\notin S_{P}$ if and only if
    $\phi_{a}(P)$ is regular singular type and 
    strongly semisimple with the spectrum
    $\{(0)\,;\,(n)\}$.
\end{prop}
\begin{proof}
    Multiplying an element in $K(x)$ from the left, 
    we may assume $P\in W[x]$.
    First let us suppose $a\notin S_{P}$.
    It suffices to consider the case $a=0$.
    Then multiplying $x^{m}$ from the left, 
    we may suppose
     $P=\sum_{i=0}^{n}a_{i}(x)\partial^{i}$ where 
     $a_{i}(x)\in K[x]$ and  $a_{n}(0)\neq 0$.
     Then $v(P)=-n$ and hence we can write 
     $P=\sum_{i=-n}^{N}x^{i}p_{i}(\vartheta)$
     with $p_{i}(s)\in K[s]$ and $\deg_{K[s]}p_{-n}(s)=n$.
     Then Lemma \ref{weight} shows that $\phi_{0}(P)$ is regular 
     singular
     type and strongly semisimple with the spectrum $\{(0)\,;\,(n)\}$.

     Let us suppose the converse. Lemma \ref{weight} shows that 
     by multiplying $x^{m}$, 
     we can set $P=\sum_{i=-n}^{N}x^{i}p_{i}(\vartheta)\in W[x]$ where 
     $p_{i}(s)$, $i=-n,-n+1,\ldots,-1$ satisfy 
     the equation $(\ref{equ1})$.
     Then $P=\sum_{i=0}^{n}a_{i}(x)\partial^{i}$ satisfies that 
     $a_{i}(x)\in K[x]$, $i=0,\ldots,n$ and $a_{n}(0)\neq 0$. 
     Thus $0\notin S_{P}$.
\end{proof}
      \begin{df}[primitive component]
          \normalfont
          We say that  
          $P=\sum_{i=0}^{n}a_{i}(x)\partial^{n}\in W[x]$ 
          is \textsl{primitive} if 
             \begin{enumerate}
                 \item 
                     $\mathrm{gcd}_{K[x]}\{a_{i}(x)\mid i=0,\ldots,n\}=1$,
                 \item $a_{n}(x)\neq 0$ is monic.
                 \end{enumerate}
                 For $P\in W(x)$, there exist $f(x)\in K(x)$ 
                 and the primitive element $\tilde{P}\in W[x]$, 
                 and then we can uniquely decompose $P$ as
                 \[
                 P=f(x)\tilde{P}.
                 \]
                 We denote this primitive element by $\mathrm{Prim}(P)$ 
                 and call this the {\em primitive component} of $P$.
         \end{df}
         Let us see some properties of primitive components.
         \begin{lem}\label{addprim}
             Let $P\in W[x]$ be a primitive element. Then take $a,\mu\in K$
             and decompose $\mathrm{Add}^{\mu}_{a}(P)=f(x)\mathrm{Prim\,}
             (\mathrm{Add}^{\mu}_{a}(P))$. 
             Then there exists $m\in \mathbb{Z}$ such that 
             $f(x)=x^{m}$.
         \end{lem}
         \begin{proof}
             We may assume $a=0$. 
             Set $\mathrm{Add}^{\mu}_{0}(P)
             =\sum_{i=0}^{n}a_{i}(x)\partial^{i}$.
             Then each $a_{i}(x)\in K(x)$ $(i=0,\ldots,n)$ has 
             pole only at $0$.
             Thus if we decompose 
             $f(x)$ as the product of linear factors, 
             $f(x)=x^{m_{0}}\prod_{j=1}^{m}(x-a_{i})^{m_{i}}$,
             then $m_{i}\le 0$ for all $i=1,\ldots,m$.
             Then Lemma \ref{shift} and Lemma \ref{weight} show that
             $m_{i}=0$ for all $i=1,\ldots,m$.
         \end{proof}
         The following proposition is owing to H. Tsai \cite{Ts} 
         which assures that 
         if $P\in W(x)$ generate the maximal ideal in $W(x)$ 
         and satisfies a good condition, then
         $\mathrm{Prim}(P)\in W[x]$ also generates 
         the maximal ideal in $W[x]$.
         \begin{prop}[Tsai \cite{Ts}]\label{Tsai}
             Let us consider $P\in W(x)$ and put
             $\{a_{1},\ldots,a_{p}\}=S_{P}\backslash\{\infty\}$.
             At each $x=a_{i}$ $(i=1,\ldots,p)$, let us write
             \[
             P=\sum_{j=r_{i}}^{N_{i}}(x-a_{i})^{j}p_{j}^{(i)}
             (\vartheta_{a_{i}})
             \]
             by integers $r_{i}$, $N_{i}$, 
             and polynomials 
             $p^{(i)}_{j}(s)\, (p^{(i)}_{r_{i}}(s)\neq 0)$. 
             Let us suppose that  there exist  
             $m_{i}\in \mathbb{Z}_{\ge 0}$ for $i=1,\ldots,p$ such that  
        \begin{equation}
     \begin{split}
         &p_{r_i}^{(i)}(0)=p_{r_{i}}^{(i)}(1)
         =\cdots=p_{r_{i}}^{(i)}(m_{i}-1)=0,\\
         &p_{r_{i}+1}^{(i)}(0)=p_{r_{i}+1}^{(i)}(1)
         =\cdots =p_{r_{i}+1}^{(i)}(m_{i}-2)=0,\\
         &\cdots\\
         &p^{(i)}_{r_{i}+m_{i}-1}(0)=0.
     \end{split}
     \end{equation}
             Here we put $p_{j}^{(i)}(s)=0$ if $j>N_{i}$. 
             Moreover we assume that characteristic polynomials 
             $ch_{a_{i}}(P)(s)=p_{r_{i}}^{(i)}(s)$ have no integer root
             other than $0,1,\ldots,m_{i}-1$.
     
            Then if $P$ is irreducible in $W(x)$, 
            i.e., $P$ generates the maximal left ideal of $W(x)$, 
            then the primitive component $\mathrm{Prim\,}(P)$ of $P$ 
            generates the maximal ideal of $W[x]$.
         \end{prop}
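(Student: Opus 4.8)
=== PROOF PLAN ===

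\textbf{Overview.} The plan is to show that the only obstruction to $\operatorname{Prim}(P)$ generating a maximal left ideal of $W[x]$ comes from localizing at the singular points $c_1,\dots,c_p$ and at $\infty$, and that the hypotheses on the characteristic polynomials and on the $m_i$ rule out exactly the ``apparent singularity'' phenomenon that could create a nontrivial divisor of $\operatorname{Prim}(P)$ in $W[x]$ but not in $W(x)$. Concretely, $W[x]$ and $W(x)$ differ only by localization in $x-c$; a left ideal $W[x]Q \supsetneq W[x]\operatorname{Prim}(P)$ with $Q \notin W[x]^\times = K^\times$ would, after passing to $W(x)$, become all of $W(x)$ by irreducibility, so $Q$ must be a ``unit up to singular points,'' i.e. $Q$ is invertible in the localization $W(x)$ but its inverse has poles only at the $c_i$ and possibly $\infty$. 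The task is to show no such proper $Q$ exists.

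\textbf{Main steps.} First I would recall the factorization picture: if $\operatorname{Prim}(P) = SQ$ in $W[x]$ with $Q$ noninvertible, then since $P$ (equivalently $\operatorname{Prim}(P)$) is irreducible in $W(x)$, the image of $Q$ in $W(x)$ must be a unit, so $Q$ differs from an element of $W[x]$ generating the unit ideal only at the singular points; equivalently $\operatorname{rank} Q$ could be positive but $W(x)Q = W(x)$. I would then argue that such a $Q$ must itself be (up to left multiplication) of the form $b(x)^{-1}\prod (x-c_i)^{k_i}$ times something, i.e. the obstruction localizes, and reduce to the local statement: at each $x = c_i$, write $Q = \sum_{j \ge s_i}(x-c_i)^j q^{(i)}_j(\vartheta_{c_i})$; the condition that $W[x]Q = W[x]$ fails but $W(x)Q = W(x)$ forces $q^{(i)}_{s_i}(t)$ to be a nonzero constant (no $\vartheta$-dependence) while $s_i > 0$ --- i.e. $Q$ looks like $(x-c_i)^{s_i}\cdot(\text{unit})$ near $c_i$. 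Next, using Lemma \ref{weight}: the existence of the factorization $\operatorname{Prim}(P) = SQ$ with $Q$ having a pure power $(x-c_i)^{s_i}$ as leading term at each $c_i$ means precisely that $(x-c_i)^{-s_i}S' \in W[x]$ for the relevant rearrangement, so the divided operator $\prod_i (x-c_i)^{-s_i}\operatorname{Prim}(P)$ stays in $W[x]$; but Lemma \ref{weight} translates this into vanishing conditions on $p^{(i)}_{r_i},\dots$ exactly of the shape already assumed for the $m_i$, together with --- and this is the crux --- the requirement that $\mathrm{C}_{c_i}(P)(t) = p^{(i)}_{r_i}(t)$ vanish at further integer points $\ge m_i$. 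That contradicts the hypothesis that $\mathrm{C}_{c_i}(P)(t)$ has no integer root $\ge m_i$. Finally I would check the point at $\infty$: a noninvertible $Q$ with $W(x)Q = W(x)$ would also have to be ``a unit up to poles,'' and the primitivity normalization (gcd of coefficients $=1$, leading coefficient monic) together with the degree/weight formulas (\ref{degree from NP}), (\ref{weight from NP}) forbids pulling out a nontrivial polynomial factor or lowering the rank, so no obstruction arises from $\infty$ either.

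\textbf{Where the difficulty lies.} The routine part is the localization reduction (standard: $W(x)$ is a localization of $W[x]$ with respect to the Ore set $K[x]\setminus\{0\}$, and an irreducible operator in $W(x)$ can only fail to be ``irreducible'' in $W[x]$ via a factor supported at finitely many points). The genuinely delicate step is the precise bookkeeping in the local analysis: one must show that \emph{if} $\operatorname{Prim}(P)$ admits a proper left factor in $W[x]$, then after dividing out the forced powers $(x-c_i)^{s_i}$ one lands back in $W[x]$ in a way that Lemma \ref{weight} governs, and that the extra vanishing forced beyond the given $m_i$-conditions is genuinely an integer root of $\mathrm{C}_{c_i}(P)(t)$ that is $\ge m_i$. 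In other words, the heart of the argument is converting ``$Q$ is a proper factor in $W[x]$ but a unit in $W(x)$'' into ``$\mathrm{C}_{c_i}(P)$ has an integer root $\ge m_i$ for some $i$,'' which is the contrapositive of the hypothesis; this is essentially Tsai's argument (hence the citation \cite{Ts}) adapted to the possibly-irregular setting, and I expect the write-up to follow that source closely while tracking the weight filtration carefully at each $c_i$ and at $\infty$.
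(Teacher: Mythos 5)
The first thing to say is that the paper does not actually prove this proposition: its entire proof is the single sentence ``This is a direct consequence of Corollary 5.5 in \cite{Ts}.'' So your attempt to reconstruct the underlying argument is doing strictly more work than the text you are being compared against, and your overall picture --- $W(x)$ is the Ore localization of $W[x]$ at $K[x]\setminus\{0\}$, so any failure of maximality must be detected at the singular points, where Lemma \ref{weight} converts divisibility by powers of $(x-c_i)$ into integer roots of the indicial polynomials --- is the correct one and is indeed what Tsai's result packages.

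Taken as a standalone proof, however, your plan has two genuine gaps. First, you reduce to a factorization $\mathrm{Prim}(P)=SQ$, i.e.\ you implicitly assume that every proper overideal of $W[x]\,\mathrm{Prim}(P)$ is principal; left ideals of the Weyl algebra $W[x]$ need not be principal (unlike those of the Euclidean ring $W(x)$), so the argument has to be run instead on the module $M=W[x]/W[x]\,\mathrm{Prim}(P)$ and its submodules. Second, you only treat the case where the overideal $J$ satisfies $W(x)J=W(x)$. Since $W(x)P$ is merely maximal, the other alternative is $W(x)J=W(x)P$, i.e.\ $J$ is contained in the Weyl closure $W(x)P\cap W[x]$; ruling this case out amounts to showing that the Weyl closure of $\mathrm{Prim}(P)$ equals $W[x]\,\mathrm{Prim}(P)$ (equivalently, that $M$ has no torsion submodule), and this is precisely where the $m_i$-equations together with the absence of integer roots $\ge m_i$ of $\mathrm{C}_{c_i}(P)$ enter via Lemma \ref{weight}. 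The case you do treat corresponds to $M$ having a nonzero quotient supported at finitely many points of $K$; note that torsion $W[x]$-modules are supported on $K$ and never at $\infty$, so the separate worry about the point at infinity is not needed for that step. With the two cases separated and the principal-ideal assumption removed, the local analysis you outline does go through and recovers Tsai's Corollary 5.5.
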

\begin{proof}
    This follows from Corollary 5.5 in \cite{Ts}.
\end{proof}
\begin{lem}\label{deg vs addition}                                                          
    Let $P\in W[x]$ be a primitive element.
    Suppose that $\phi_{0}(P)$ has an unramified 
    e-component $(0,Q)$ in $\widehat{W}(z)$ 
    with the spectrum
    \begin{equation}\label{eqqqq}
        \{(0,\lambda_{1},\ldots,\lambda_{l})\,;
        \,(m_{0},m_{1},\ldots,m_{l})\}.
    \end{equation}
    Then $P_{\lambda_{1}}
    =\mathrm{Prim\,}(\mathrm{Add}_{0}^{-\lambda_{1}}(P))$ 
    has the unramified e-component $(0,Q_{\lambda_{1}})$ 
    in $\widehat{W}(z)$ with the spectrum
    \[
    \{(-\lambda_{1},0,\lambda_{2}-\lambda_{1},
    \ldots,\lambda_{l}-\lambda_{1})\,
    ;\,(m_{0},m_{1},\ldots,m_{l})\}.
    \]
    Moreover
    \[
    \deg P_{\lambda_{1}}-\deg P=m_{0}-m_{1}.
    \]
\end{lem}
 \begin{proof}
     The first assertion follows from Proposition \ref{shift} 
     and the second one from Lemma \ref{weight} and Lemma \ref{addprim}.
     \if0
     Let us write $P=\sum_{i=0}^{n}p_{i}(x)\partial^{i}$, 
     $P_{\lambda_{1}}=\sum_{i=0}^{n}q_{i}(x)\partial^{i}$. 
     Let us assume $w(x)=0$, the e-factor corresponding to $Q$.
     Then there exists $N\in \Z$ 
     such that $\frac{q_{n}(x)}{p_{n}(x)}=x^{N}$. 
     Let us note that the Newton polygon of 
     $P_{\lambda_{1}}$ is obtained by
     the parallel translation of  that of $P$ 
     to the  vertical direction $N$.
     Thus we have $N=m_{0}-m_{1}$ by Lemma \ref{weight} .
     From the  equation $(\ref{degree from NP})$,
     there are $s,s'\in \Z$ and
     \begin{align*}
         \deg P=\deg p_{n}(x)+s,\\
         \deg Q=\deg q_{n}(x)+s'.
     \end{align*}
     By the definition of $s$ and $s'$, 
     they are invariant under $\mathrm{Add}_{0}^{-\lambda_{1}}$. 
     Thus $s=s'$. 
     Hence 
     \[
     \deg Q-\deg P=\deg q_{n}(x)-\deg p_{n}(x)=N-M=m_{0}-m_{1}.
     \]
     \fi
 \end{proof}
\subsection{Fourier-Laplace transform of e-components with slope 
$\lambda\le 1$}
In the subsection \ref{add lap}, we see the Fourier-Laplace transform of 
e-components. 
However we exclude e-components of  the slope $\lambda=0$ 
in Proposition \ref{finite to infinity} 
and them of the slope $\lambda\le 1$ 
in Proposition \ref{infinity to infinity}.
Thus we shall see the remaining cases in this subsection.
\begin{prop}\label{kokokore}      
    Let $P\in W[x]$ be primitive.
    Fix $a\in S_{P}\backslash\{\infty\}$ and 
    suppose that $\phi_{a}(P)\in \widehat{W}(z_{a})$ has an unramified 
    e-component $(0,Q)$ 
    in $\widehat{W}(z_{a})$ with the spectrum
    \[
    \{(0,\lambda_{1},\ldots,\lambda_{l});(m_{0},m_{1},\ldots,m_{l})\}.
    \]
    Then $\phi_{\infty}(\mathcal{L}(P))\in \widehat{W}(z_{\infty})$ has the 
    unramified e-component 
    $(-az^{-1},Q')$ in $\widehat{W}(z_{\infty})$ with the spectrum
    \[
    \{(\lambda_{1}+1,\ldots,\lambda_{l}+1);(m_{1},\ldots,m_{l})\}.
    \]
\end{prop}
\begin{rem}
    In the above proposition we may allow the case $m_{0}=0$. 
    Thus we extend our notation for spectra as follows.
    Let us consider $(\lambda_{1},\ldots,\lambda_{r})\in K^{r}$ and 
    $(m_{1},\ldots,m_{r})\in (\mathbb{Z}_{\ge 0})^{r}$.
    Put $I=\left\{i\in \{1,\ldots,r\}\mid m_{i}\neq 0\right\}$
    and suppose $I\neq \emptyset$.
    Then we say that $P\in \widehat{W}(x)$ of regular singular type 
    and strongly semisimple has the spectrum
    $\{(\lambda_{1},\ldots,\lambda_{r})\,;\,(m_{1},\ldots,m_{r})\}$
    if $P$ has the spectrum 
    $\{(\lambda_{i})_{i\in I}\,;\,(m_{i})_{i\in I}\}$.
\end{rem}
 \begin{proof}
     We may consider only the case of $a=0$.  
     Set $P=\sum_{i=r}^{N}x^{i}p_{i}(\vartheta)$, $p_{r}(s)\neq 0$. 
     Lemma \ref{weight} tells us that $v(P)=r=-m_{0}$ and 
     \begin{align*}
         x^{-m_{0}+i}p_{-m_{0}+i}(\vartheta)&
         =x^{-m_{0}+i}\vartheta\cdots(\vartheta-m_{0}+i+1)\bar{p}_{-m_{0}+i}(\vartheta)\\
     &=x^{-m_{0}+i}x^{m_{0}-i}\partial^{m_{0}-i}
     \bar{p}_{-m_{0}+i}(\vartheta)\\
     &=\partial^{m_{0}-i}\bar{p}_{-m_{0}+i}(\vartheta),
     \end{align*}
     for $i=0,\ldots m_{0}-1$. 
     Here $\bar{p}_{j}(s)$ are polynomials. 
     If we denote valuations of $\widehat{W}(z_{0})$ and $\widehat{W}(z_{\infty})$
     by $v_{0}$ and $v_{\infty}$ respectively,
     we have $v_{0}(P)=v_{\infty}(\phi_{\infty}(\mathcal{L}(P)))$.
     Thus if we set 
     $\mathcal{L}(P)=\sum_{i=-m_{0}}^{N}x^{-i}\tilde{p}_{i}(\partial)$,
     then $\tilde{p}_{i}(s)=\bar{p}_{i}(-s-1)$ for $i=-1,-2,\ldots,-m_{0}$.
     Then the proposition follows.
 \end{proof}
Similarly we have the following.
 \begin{prop}\label{lap inv}
     Let us consider $P\in W[x]$. 
     Suppose that $\phi_{\infty}(P)\in \widehat{W}(z_{\infty})$ 
     has an e-component 
     $(az^{-1},Q)$ in $\widehat{W}(z_{\infty})$ with the spectrum
     \[
     \{(\lambda_{1},\ldots,\lambda_{l})\,;\,(m_{1},\ldots,m_{l})\}
     \]
     where $\lambda_{i}\notin\mathbb{Z}$ for $i=1,\ldots,l$.
     Then we have $v(\phi_{a}(\mathcal{L}(P)))\le 0$
     and   
     $\phi_{a}(\mathcal{L}(P))\in \widehat{W}(z_{a})$
     has the e-component
     $(0,Q')$ in $\widehat{W}(z_{a})$ with the spectrum
     \[
     \{(0,\lambda_{1}-1,\ldots,\lambda_{l}-1)\,;\,
     (m_{0},m_{1},\ldots,m_{l})\}.
     \]
     Here we put $-m_{0}=v(\phi_{a}(\mathcal{L}(P)))$.
 \end{prop}
 \begin{proof}
     Let us put 
     $\mathcal{L}(P)=\sum_{i=m_{a}}^{N}(x-a)^{i}p_{i}(\vartheta)$,
     $p_{i}(s)\in K[s]$ and $p_{m_{a}}\neq 0$,
     and suppose that $m_{a}=v(\phi_{a}(\mathcal{L}(P))> 0$.
     Then $P=\sum_{i=m_{a}}^{N}(\partial-a)^{i}p_{i}(-\vartheta-1)$
     and this implies that $P=(\partial-a)^{m_{a}}P'$ with $P'\in W[x]$.
     This shows that $ch(\phi_{\infty}(P)^{-az^{-1}_{\infty}})(s)=0$ has 
     integer roots and contradicts to the assumption $\lambda_{i}\notin
     \mathbb{Z}$ for all $i=1,\ldots,l$.
     The remaining follows from the same argument as in Proposition \ref{kokokore}.
 \end{proof}
 \subsection{Euler transform}
 In the previous subsections 
 we compute the changes of spectra
 of e-components by the addition and 
 the Fourier-Laplace transform.
 In this subsection we introduce Euler transform defined 
 by the composition of the Fourier-Laplace transform and the addition,
 and give an explicit computation of changes of spectra.
 
\begin{df}[Euler transform, cf. \cite{O}]
    \normalfont
    The {\em Euler transform} of $P\in W(x)$ 
    with the parameter $\lambda\in K$ is defined by 
     \[
     E(\lambda)P=
     \mathcal{L}\circ \mathrm{Prim}\circ 
     \mathrm{Add}^{\lambda}_{0}
     \circ \mathcal{L}^{-1}\circ\mathrm{Prim}(P)\in W[x].
     \]
\end{df}
\begin{rem}
    \normalfont
    This is an algebraic analogue of the classical description 
    of Euler transform:                           
    \[
    I^{\mu}_{c}g(z)
    =\frac{1}{\Gamma(\mu)}\int_{c}^{z}g(x)(z-x)^{\mu-1}\,dx
    =\int_{-i\infty}^{i\infty}y^{-\mu}
    \int_{c}^{\infty}g(x)e^{-xy}\,dx\, e^{zy}\,dy.
    \]
\end{rem}
\if0
 \begin{rem}[A comparison with the Katz middle convolution]
  Although we only deal with differential operators 
  with polynomial coefficients, 
  these can be seen as a special case of $\mathscr{D}$-module 
  setting which is investigated by N. Katz for the Fuchsian case 
  (\cite{K}) and D. Arinkin for general cases (\cite{A}). 
  In the $\mathscr{D}$-module case, 
  the middle convolution plays the same role as the Euler transform 
  in this paper. 
  Let us see the relationship between our Euler transform and 
  the middle convolution by Katz. 
  We follow Arinkin's paper \cite{A} for the definition of 
  the middle convolution. 
 
  Let us take $P\in W(x)$ with singular points $c_{1},\ldots,c_{p}\in K$ and $c_{0}=\infty$. As we see in Proposition \ref{Tsai}, the result of Tsai (see \cite{Ts}) tells us that if $P$ is irreducible in $W(x)$ and satisfies the conditions in this proposition, then $\mathrm{Prim\,}(P)$ generates the maximal ideal in $W[x]$. Thus we can see that 
 \[
 W(x)/W(x) P\rightarrow W[x]/W[x] \mathrm{Prim\,}(P)
 \]
 gives an analogue of the minimal extension (Deligne-Goresky-Macpherson extension) in the $\mathscr{D}$-module setting. We also notice that tensoring a 1-dimensional local system corresponds to $\Ad(x^{\mu})$ for some $\mu\in K$ in our setting. Hence under the suitable assumptions, we can say that our Euler transform  agrees with the middle convolution.
\end{rem}
 \fi

\begin{thm}\label{Computation of Euler transform}
     Let us consider $P\in W(x)$ satisfying Assumption \ref{spec}
     and put $\overline{S}_{P}=\{a_{0},a_{1},\ldots,a_{p}\}$ 
     where $a_{0}=\infty$ and $a_{1},\ldots,a_{p}\in K$.
     Let 
     $\left\{\left(w_{1}^{[i]},P^{[i]}_{1}\right),\ldots,
     \left(w_{r_{i}},P^{[i]}_{r_{i}}\right)\right\}$
     be the complete set of e-components of 
     $\phi_{a_{i}}(P)\in \widehat{W}(z_{a_{i}})$
     for each $a_{i}\in \overline{S}_{P}$.
     Let us denote the spectra of $P^{[i]}_{j}$ by
     $$\left\{\left(\lambda^{[i,j]}_{1},\ldots,
     \lambda^{[i,j]}_{s_{[i,j]}}\right)\,;\,
     \left(m^{[i,j]}_{1},\ldots,
     m^{[i,j]}_{s_{[i,j]}}\right)\right\},
     \quad i=0,\ldots,p,\,j=1,\ldots,r_{i}.$$
     Moreover we assume the following.
     \begin{enumerate}
         \item Suppose that $w^{[i]}_{1}=0$ for $i=0,\ldots,p$ and 
             $\lambda^{[i,j]}_{1}=0$ for $i=1,\ldots,p$, 
             $j=1,\ldots,r_{i}$.
     \item Suppose that
         \begin{equation}\label{noninteger}
             \begin{aligned}
     \lambda^{[0,j]}_{k}&\notin \mathbb{Z}\quad 
     (k=1,\ldots,s_{[0,j]})
     \quad\text{if } \delta(w^{[0,j]})\le 1,\\
     \lambda_{k}^{[i,1]}+\lambda^{[0,1]}_{1}&\notin 
     \Z \quad \text{for all }i=1,\ldots,p,\,k=2,\ldots,s_{[i,1]}.
     \end{aligned}
 \end{equation}
     \end{enumerate}
 Let us put $\mu=1-\lambda^{[0,1]}_{1}$ for simplicity.
     Then we have the following.
     \begin{itemize}
         \item[{\em (i)}] We have 
             $\mathrm{rank\,}E(\mu)P=\mathrm{rank\,}P+d$
             where 
             \[
             d=\deg \mathrm{Prim}(P)-\sum_{j=1}^{s_{[0,1]}}
             m_{j}^{[0,1]}-m_{1}^{[0,1]}.
             \]
             We also have $m^{[i,1]}_{1}+d\ge 0$ for all $i=0,\ldots,p$.
         \item[{\em (ii)}] We have $S_{E(\mu)P}\subset S_{P}$ and 
             $E(\mu)(P)$ satisfies Assumption \ref{spec}.
         \item[{\em (iii)}] 
             At each $a_{i}$ $(i=0,\ldots,p)$,
             $\phi_{a_{i}}(E(\mu)P)$ has the 
             following complete set of e-components
             $\left\{\left(\tilde{w}^{[i]}_{1},\tilde{P}^{[i]}_{1}\right),
             \ldots,
             \left(\tilde{w}^{[i]}_{\tilde{r}_{i}},
             \tilde{P}^{[i]}_{\tilde{r}_{i}}\right)\right\}$.
             We have $\tilde{r}_{i}=r_{i} \ (i=0,\ldots,p)$
             and $\tilde{w}^{[i]}_{j}=w^{[i]}_{j}
             \ (i=0,\ldots,p,\,j=1,\ldots,r_{i}).$
             Here we change the order of 
             $\left(\tilde{w}^{[i]}_{j},\tilde{P}^{[i]}_{j}\right)$,
             $j=1,\ldots,r_{i}$, if necessary.
             Moreover each $\tilde{P}^{[i]}_{j}$ 
             $(i=1,\ldots,p,\,j=1,\ldots,r_{i})$ has 
             the spectrum
             \begin{align*}
             &\left\{\left(\lambda^{[i,j]}_{1}-\mu^{[i,j]},\ldots,
             \lambda^{[i,j]}_{s_{[i,j]}}-\mu^{[i,j]}\right)\,;\,
             \left(m^{[i,j]}_{1},\ldots,m^{[i,j]}_{s_{[i,j]}}\right)
         \right\}
             \quad \text{if }j\ge 2,\\
             &\left\{\left(0,\lambda^{[i,1]}_{2}-\mu,\ldots,
             \lambda^{[i,1]}_{s_{[i,1]}}-\mu\right)\,;\,
             \left(m^{[i,1]}_{1}+d,\ldots,
             m^{[i,1]}_{s_{[i,1]}}\right)\right\}
             \end{align*}
             where 
             $\mu^{[i,j]}=(\delta(w^{[i]}_{j})+1)\mu.$
             On the other hand 
             $\tilde{P}^{[0]}_{j}$ has the spectrum
             \begin{align*}
             &\left\{\left(\lambda^{[0,j]}_{1}-\mu^{[0,j]},\ldots,
             \lambda^{[0,j]}_{s_{[0,j]}}-\mu^{[0,j]}\right)\,;\,
             \left(m^{[0,j]}_{1},\ldots,
             m^{[0,j]}_{s_{[0,j]}}\right)\right\}
             \quad\text{if }j\ge 2,\\
             &\left\{\left(1+\mu,\lambda^{[0,1]}_{2}+\mu,\ldots,
             \lambda^{[0,1]}_{s_{[0,1]}}+\mu\right)\,;\,
             \left(m^{[0,1]}_{1}+d,\ldots,
             m^{[0,1]}_{s_{[0,1]}}\right)\right\}
             \end{align*}
             where 
             $\mu^{[0,j]}
             =\left(\delta(w^{[0]}_{j})-1\right)\mu.$
             \end{itemize}
 \end{thm}
 \begin{proof}
    First we show (i). Let us denote the valuations 
     of $\widehat{W}(z_{a_{i}})$ by $v_{a_{i}}$ for 
     $i=0,\ldots,p$.
     Then from the equations $(\ref{NP finite})$ 
     and Lemma \ref{weight}
     we have
     \[
         v_{a_{i}}(a_{N}(x))-N=
         \sum_{j=2}^{r_{i}}\delta(w^{[i]}_{j})
         \sum_{k=1}^{s_{[i,j]}}m^{[i,j]}_{k}-m^{[i,1]}_{1}.
     \]
     Since $N=\sum_{j=1}^{r_{i}}\sum_{k=1}^{s_{[i,j]}}m^{[i,j]}_{k}$, 
     then
     \[
         v_{a_{i}}(a_{N}(x))=
         \sum_{j=2}^{r_{i}}(\delta(w^{[i]}_{j})+1)
         \sum_{k=1}^{s_{[i,j]}}m^{[i,j]}_{k}
         +\sum_{k'=1}^{s_{[i,1]}}m^{[i,1]}_{k'}
         -m^{[i,1]}_{1}.
     \]
     Let us note that $a_{N}(x)$ has zeros only at $a_{1},\ldots,a_{p}$. 
     Thus $\deg_{K[x]}{a_{N}(x)}=\sum_{i=1}^{p}v_{a_{i}}(a_{N}(x)).$
     On the other hand, from the equation $(\ref{degree from NP})$,
     \[
         \mathrm{deg\,}\mathrm{Prim}(P)=\mathrm{deg\,}_{K[x]}a_{N}(x)+
         \sum_{j=2}^{r_{0}}(\delta(w^{[0]}_{j})-1)
         \sum_{k=1}^{s_{[0,j]}}m^{[0,j]}_{k}.
     \]
     Combining these formulas, we have
     \begin{align*}
         \mathrm{deg\,}&\mathrm{Prim}(P)=\\
         &
         \sum_{i=1}^{p}\sum_{j=2}^{r_{i}}\sum_{k=1}^{s_{[i,j]}}
         (\delta(w^{[i]}_{j})+1)m^{[i,j]}_{k}
         +\sum_{j'=1}^{r_{0}}\sum_{k'=1}^{s_{[0,j']}}
         (\delta(w^{[0]}_{j'})-1)m^{[0,j']}_{k'}\\
         &+\sum_{i'=1}^{p}\sum_{k'=1}^{s_{[i',1]}}m^{[i',1]}_{k'}
         -\sum_{i'=1}^{p}m^{[i',1]}_{1}.
     \end{align*}

     We can see that $\mathcal{L}^{-1}(\mathrm{Prim\,}(P))\in W[x]$ 
     has the e-component $(0,Q)$ in $\widehat{W}(x)$ with the spectrum 
     \[
         \left\{\left(0,\lambda^{[0,1]}_{1}-1,\ldots,
         \lambda^{[0,1]}_{s_{[0,1]}}-1\right)
         \,;\,\left(N_{0},m^{[0,1]}_{1},\ldots,
         m^{[0,1]}_{s_{[0,1]}}\right)\right\}
     \]
     where $N_{0}=\deg \mathrm{Prim\,}(P)
     -\sum_{j=1}^{s_{[0,1]}}m_{j}^{[0,1]}$ 
     by Proposition \ref{lap inv},
     the equation $v(\phi_{0}(\mathcal{L}(P)))=v(\phi_{\infty}(P))$
     and the equations $(\ref{degree from NP}),
     \,(\ref{weight from NP})$.
     By Proposition \ref{lap inv} and the assumption $(\ref{noninteger})$,
     we have $N_{0}\ge 0$.
     We note that $\mathcal{L}^{-1}\circ\mathrm{Prim}(P)$ 
     is the primitive element. 
     Indeed if there exist $f(x)(\neq 0)\in K[x]$ 
     and $R\in W[x]$ such that 
     \[
     \mathcal{L}^{-1}(\mathrm{Prim\,}(P))=f(x)R,
     \]
     then $P$ can be divided by
     \[
     f(-\partial)=C(\partial-\alpha_{1})\cdots (\partial-\alpha_{k})
     \]
     for some constants  $C,\alpha_{1},\ldots,\alpha_{k}\in K$. 
     However this means that $\phi_{a_{0}}(P)$ has e-components
     $(\alpha_{1}z_{a_{0}}^{-1},Q_{1}),\ldots,
     (\alpha_{k}z_{a_{0}}^{-1},Q_{k})$
     in $\widehat{W}(z_{a_{0}})$ 
     whose characteristic polynomials have integer roots. 
     This contradicts to the assumption $(\ref{noninteger})$.   
     
     Hence 
     $Q_{\mu}=
     \mathrm{Prim\,}\circ\mathrm{Add}^{\mu}\circ\mathcal{L}^{-1}
     \circ\mathrm{Prim\,}(P)$ 
     has the e-component $(0,\mathrm{Add}^{\mu}Q)$ in $\widehat{W}(x)$ 
     with the spectrum
     \[
         \left\{\left(\mu,0,\ldots,
             \lambda^{[0,1]}_{s_{[0,1]}}+\mu-1\right)\,
         ;\,
         \left(N_{0},m^{[0,1]}_{1},\ldots,
         m^{[0,1]}_{s_{[0,1]}}\right)\right\}
     \]
     and
     \begin{align*}
         \deg Q_{\mu}&=\deg \mathcal{L}^{-1}\circ\mathrm{Prim}(P)
         +N_{0}-m_{1}^{[0,1]}\\
         &=\mathrm{rank\,} P+N_{0}-m_{1}^{[0,1]}
     \end{align*}
     by Lemma \ref{deg vs addition}. Thus 
     \[
         \mathrm{rank\,}E(\mu)P=\deg Q_{\mu}
         =\mathrm{rank\,}P+N_{0}-m_{1}^{[0,1]}=\mathrm{rank\,}P+d.
     \]
     Here $d=N_{0}-m^{[0,1]}_{1}
     =\deg\mathrm{Prim}(P)
     -\sum_{j=1}^{s_{[0,1]}}m^{[0,1]}_{j}-m^{[0,1]}_{1}$.

     Next we see that $m^{[i,1]}_{1}+d\ge 0$ for all $i=0,\ldots,p$. 
     The case $i=0$ follows from $N_{0}\ge 0$. Thus we see the cases
     $i=1,\ldots,p$.
     By Lemma \ref{addprim}, $Q_{\mu}=x^{N_{\nu}}
     \mathrm{Add}^{\mu}\circ\mathcal{L}^{-1}
     \circ\mathrm{Prim\,}(P)$ where
     $N_{\mu}=\deg Q_{\mu}-\deg\mathcal{L}\circ\mathrm{Prim}(P)
     =N_{0}-m^{[0,1]}_{1}=d$.
     Hence we have $v(\phi_{a}(Q_{\mu}))-v(\phi_{a}(\mathcal{L}^{-1}
     \circ\mathrm{Prim}(P)))=N_{\mu}=d$ 
     for any $a\in K\backslash\{0\}$.
     Moreover we have $v(\phi_{a_{i}}(Q_{\mu}))\ge 0$ for $i=1,\ldots,p$
     by the assumption $(\ref{noninteger})$,
     Proposition \ref{kokokore}
     and \ref{lap inv}.
     Thus $0\le v(\phi_{a_{i}}(Q_{\mu}))=v(\phi_{a}(\mathcal{L}^{-1}
     \circ\mathrm{Prim}(P)))+d=m^{[i,1]}_{1}+d$ for $i=1,\ldots,p$
     as required.   
     Hence we obtain $\textrm{(i)}$.  
     
     Let us see the Euler transform preserves 
     the locations of singular points.
     Let us put $\mathrm{Prim}(P)=\sum_{i=0}^{N}a_{i}(x)\partial^{i}$ 
     ,$a_{N}(x)\neq 0$.
     Then $E(\mu)P
     =\partial^{m}\sum_{i=0}^{N}a_{i}(x+\mu\partial^{-1})\partial^{i}
     =\sum_{i=0}^{N'}a'_{N'}(x)\partial^{i}$
     for some $m\in \mathbb{Z}$ and $N'\in \mathbb{Z}_{\ge 0}$.
     Thus $a_{N}(x)=a'_{N'}(x)$ and 
     \[S_{P}\backslash\{\infty\}
         =\{\text{roots of }a_{N}(x)=0\}
         =\{\text{roots of }a'_{N'}(x)=0\}\supset
     S_{E(\mu)P}\backslash\{\infty\}.
     \]

     Also the remaining of $\text{(ii)}$ and $\text{(iii)}$ directly follow from Proposition \ref{shiftII},
     \ref{finite to infinity}, \ref{infinity to infinity}, \ref{kokokore}
     and \ref{lap inv}.
\if0     
     Also we see that $E(\mu)P$ has a RS-factor with the spectral data
     \[
     \{(\mu+1,\lambda^{0}_{2}+\mu,\ldots,\lambda^{0}_{k_{0}}+\mu);(N_{0},m_{2}^{0},\ldots,m_{k_{0}}^{0})\}
     \]
     at $x=0$.

     By Proposition $\ref{finite to infinity}$, Proposition $\ref{infinity to infinity}$, and these inversion formula, we see that $E(\mu)P$ has local factors $\tilde{P}_{i,j}$ $(i=0,\ldots,p,\,j=2,\ldots,k_{i})$ as in the statement.

     Similarly, for finite singular points $c_{i}$ $(i=1,\ldots,p)$ we can see that there exist integers $N_{0}^{i}$ and $E(\mu)P$ has local factors with spectral data
     \[
     \{(0,\lambda^{i}_{1}-\mu,\ldots,\lambda^{i}_{k_{i}}-\mu);(N_{0}^{i},m_{2}^{i},\ldots,m_{k_{i}}^{i})\}.
     \]
     By Proposition $\ref{lap inv}$ we can see that 
     \begin{align*}
         m_{0}^{i}&=-\mathrm{wt}_{\infty}(\mathrm{Ad\,}(e^{cx})\mathcal{L}^{-1}(\mathrm{Prim\,}(P))),
         &N_{0}^{i}&=-\mathrm{wt}_{\infty}(\mathrm{Ad\,}(e^{cx})Q_{\mu}).
     \end{align*}
     If we write $\mathcal{L}^{-1}(\mathrm{Prim\,}(P))=\sum_{i=0}^{N}a_{i}(x)\partial^{N}$ and $Q_{\mu}=\sum_{i=0}^{N}b_{i}(x)\partial^{i}$, then by the equation $(\ref{weight from NP})$ we have
     \begin{align*}
     m_{0}^{i}&=\deg_{K[x]}a_{N}(x)+t\\
     N_{0}^{i}&=\deg_{K[x]}b_{N}(x)+t'\\
 \end{align*}
 for some integers $t,t'$. Recalling that $Q_{\lambda}$ is obtained by applying $\mathrm{Ad\,}(x^{\mu})$ to $\mathcal{L}^{-1}(\mathrm{Prim\,}(P))$ and the above $t$ is invariant by $\Ad(x^{\mu})$, we see that
 \[
 t=t'.
 \]
 Thus 
 \begin{align*}
     N^{i}_{0}-m^{i}_{0}&=\deg_{K[x]}b_{N}(x)-\deg_{K[x]} a_{N}(x)=\deg Q_{\mu}-\deg\mathcal{L}^{-1}(\mathrm{Prim\,}P)\\&=N_{0}-m^{0}_{1}.
 \end{align*}
\fi
 \end{proof}
 \section{Euler transform and the action of Weyl groups
 of Kac-Moody root systems}
 \label{Euler transform and Weyl group}
In the previous section, we compute the changes of spectra
given by additions, the Fourier-Laplace transform and 
the Euler transform.
We shall see that the changes of the spectra
induce automorphisms of a $\mathbb{Z}$-lattice and these
automorphisms generate a transformation group of this lattice.
Moreover we shall see the lattice with the transformation group
is isomorphic to a quotient lattice of a Kac-Moody root lattice 
with the Weyl group action.
 
\if0 
 \subsection{The working hypothesis}\label{hypothesis}
In the remaining of this paper, we fix $P\in W(x)$ so that this  satisfies following assumptions.
\begin{enumerate}
    \item Singular points of $P$ are $c_{0}=\infty,c_{1},\ldots,c_{p}\in K$. All these are unramified singular points.
    \item Let us write the set of local factors of $P$ at $x=c_{i}$ by $\{P_{i,1},\ldots,P_{i,k_{i}}\}$. Then there exist positive integers $m_{i,j,s}$ and $\lambda_{i,j,s}\in K$ for $i=0,\ldots,p,\,j=1,\ldots,k_{i},\,s=1,\ldots,l_{i,j}$ such that local factors $P_{i,j}$ have spectral data
        \[
        \{(\lambda_{i,j,1},\ldots,\lambda_{i,j,l_{i,j}});(m_{i,j,1},\ldots,m_{i,j,l_{i,j}})\}
        \]
        respectively.
    \end{enumerate}
    We write each exponential factors of $P_{i,j}$ by $w_{i,j}$ respectively.

    Let us define the subset 
    \[
    \mathcal{U}(P)\subset W(x)
    \]
    whose elements satisfy the following.
    \begin{enumerate}
        \item Singular points of $Q\in \mathcal{U}(P)$ are $c_{0}=\infty,c_{1},\ldots,c_{p}\in K$. All these are unramified singular points.
        \item Let us write the set of local factors of $Q$ at $x=c_{i}$ by $\{Q_{i,1},\ldots,Q_{i,k_{i}}\}$. Then there exist positive integers $m'_{i,j,s}$ and $\lambda'_{i,j,s}\in K$ for $i=0,\ldots,p,\,j=1,\ldots,k_{i},\,s=1,\ldots,l_{i,j}$ such that local factors $Q_{i,j}$ have spectral data
        \[
        \{(\lambda'_{i,j,1},\ldots,\lambda'_{i,j,l_{i,j}});(m'_{i,j,1},\ldots,m'_{i,j,l_{i,j}})\}
        \]
        respectively.

        Here we allow that  $m'_{i,j,s}=0$ and $Q_{i,j,s}=0$ for some $(i,j,s)$. Let us write
        \begin{align*}
            \lambda(Q)&=\prod_{i=0}^{p}\prod_{j=1}^{k_{i}}(\lambda'_{i,j,1},\ldots,\lambda'_{i,j,l_{i,j}}),\\
            \mathbf{m}(Q)&=\prod_{i=0}^{p}\prod_{j=1}^{k_{i}}(m'_{i,j,1},\ldots,m'_{i,j,l_{i,j}}).
        \end{align*}
    \item Exponential factors of $Q_{i,j}$ are $w_{i,j}$ which are same exponential factors of local factors $P_{i,j}$ of $P$.
    \end{enumerate}
    Here we note that $k_{i},l_{i,j}$ ($i=1,\ldots,p,\,j=1,\ldots,k_{i}$) used in the above are same one used for $P$.
    We define the product of indices
    \[
    \mathcal{T}(P)=\prod_{i=0}^{p}\{(i,j)\mid j=1,\ldots,k_{i}\}.
    \]
\fi    
\subsection{Lattice transformations induced from the Euler transform}
Take $P\in W[x]$ satisfying Assumption \ref{spec}.
Set $\{a_{0},a_{1},\ldots,a_{p}\}=\overline{S}_{P}$ 
with $a_{0}=\infty$, $a_{1},\ldots,a_{p}\in K$.
For each $a_{i}\in \overline{S}_{P}$, 
let $$\left\{\left(w^{[i]}_{1},P^{[i]}_{1}\right),\ldots,
\left(w^{[i]}_{r_{i}},P^{[i]}_{r_{i}}\right)\right\}$$ be the 
complete set of e-components of $P$ with the spectra
$$\left\{\left(\lambda^{[i,j]}_{1},\ldots,
\lambda^{[i,j]}_{s_{[i,j]}}\right);
\left(m^{[i,j]}_{1},\ldots,
m^{[i,j]}_{s_{[i,j]}}\right)\right\}$$
for $j=1,\ldots,r_{i}$.

Define
\[
    \mathcal{J}(P)=\{1,\ldots,r_{0}\}\times\cdots\times\{1,\ldots,r_{p}\}.
\]
We fix the above $P\in W[x]$ all through
Section \ref{Euler transform and Weyl group}.
\begin{df}[twisted Euler transforms]
    \normalfont
    Consider the above $P\in W[x]$.
    Then for $\mathbf{j}=(j_{0},\ldots,j_{p})\in \mathcal{J}(P)$, 
    we define the {\em twisted Euler transform} $E(\mathbf{j})$ by
    \begin{align*}
        E(\mathbf{j})P&=
        \prod_{i=0}^{p}
        \mathrm{Ad\,}_{a_{i}}(e^{w^{[i]}_{j_{i}}})
        \prod_{i'=1}^{p}
        \mathrm{Add}_{a_{i'}}^{\lambda^{[i',j_{i'}]}_{1}}
        \ \circ E(1-\lambda_{\mathbf{j}})\\ 
        &\ \circ \prod_{i=0}^{p}
        \mathrm{Ad\,}_{a_{i}}(e^{-w^{[i]}_{j_{i}}})
        \prod_{i'=1}^{p}
        \mathrm{Add}_{a_{i'}}^{-\lambda^{[i',j_{i'}]}_{1}}
        (P).
    \end{align*}
    Here $\lambda_{\mathbf{j}}=\sum_{i=0}^{p}\lambda^{[i,j_{i}]}_{1}$.
\end{df}
Then by Theorem \ref{Computation of Euler transform} we can compute 
the spectra of $E(\mathbf{j})P$ for $\mathbf{j}\in \mathcal{J}(P)$.
\begin{prop}\label{Twisted Euler transform}
    Let us consider the above $P\in W[x]$.
%
    Suppose that there exists $\mathbf{j}=(j_{0},\ldots,j_{p})
    \in \mathcal{J}(P)$ such that
    \begin{equation}\label{generic}
   \begin{aligned}
        &\lambda^{[0,j]}_{k}+
        \sum_{i=1}^{p}\lambda^{[i,j_{i}]}_{1}\notin \mathbb{Z}
        ,\ (k=1,\ldots,s_{[0,j]}),
        \text{ if }\delta(w^{[0]}_{j}-w^{[0]}_{j_{0}})\le 1,\\
        &\lambda^{[i,j_{i}]}_{k}
        -\lambda^{[i,j_{i}]}_{1}
        +\lambda_{\mathbf{j}}\notin\mathbb{Z},
        \ (i=1,\ldots,p,\,k=2,\ldots,s_{[i,j_{i}]}).
    \end{aligned}
\end{equation}
    Let us fix this $\mathbf{j}$ and   
    put $P(\mathbf{j})=E(\mathbf{j})P$.
    Let 
    \[
    \left\{\left(w^{[i]}_{1},P(\mathbf{j})^{[i]}_{1}\right),\ldots,
    \left(w^{[i]}_{r_{i}},P(\mathbf{j})^{[i]}_{r_{i}}\right)\right\}
    \]
    be the complete sets of e-components of 
    $P(\mathbf{j})$ for $i=0,\ldots,p$
    and we denote the  spectra of $P(\mathbf{j})^{[i]}_{j}$ by
    \[
    \left\{\left(\lambda(\mathbf{j})^{[i,j]}_{1},\ldots,
    \lambda(\mathbf{j})^{[i,j]}_{s_{[i,j]}}\right);
    \left(m(\mathbf{j})^{[i,j]}_{1},\ldots,
    m(\mathbf{j})^{[i,j]}_{s_{[i,j]}}\right)\right\}
    \]
    for $i=0,\ldots,p$ and $j=1,\ldots,r_{i}$.
    Then we have
            \begin{align*}
                m(\mathbf{j})^{[i,j]}_{1}&=m^{[i,j]}_{1}+d(\mathbf{j})
                &\text{if }j=j_{i},\\
                m(\mathbf{j})^{[i,j]}_{k}&=m^{[i,j]}_{k}
                &\text{otherwise},
            \end{align*}
            where
            \begin{align*}
                d(\mathbf{j})&
                =\sum_{i=1}^{p}\sum_{j=1}^{r_{i}}
                (\delta(w^{[i]}_{j}-w^{[i]}_{j_{i}})+1)
              \sum_{k=1}^{s_{[i,j]}}m^{[i,j]}_{k}\\
               &\quad 
               +\sum_{j=1}^{r_{0}}(\delta(w^{[0]}_{j}
               -w^{[0]}_{j_{0}})-1)
               \sum_{k=1}^{s_{[0,j]}}m^{[0,j]}_{k}
               -\sum_{i=0}^{p}m^{[i,j_{i}]}_{1}.
            \end{align*}
            Also we have that for 
            $i=1,\ldots,p,\,j=1,\ldots,r_{i},\,k=1,\ldots,s_{[i,j]}$, 
            \begin{align*}
                &\lambda(\mathbf{j})^{[i,j]}_{k}=\\
                &\quad\quad\begin{cases}\lambda^{[i,j]}_{k}
                    &\text{if }j=j_{i}\text{ and }k=1,\\
                    \lambda^{[i,j]}_{k}-
                    (\delta(w^{[i]}_{j}-w^{[i]}_{j_{i}})+1)
                    (1-\lambda(\mathbf{j}))&\text{otherwise},
            \end{cases}  
            \end{align*}
            and for $i=0,\,j=1,\ldots,r_{0},\,k=1,\ldots,s_{[0,j]}$,
            \begin{align*}
                &\lambda(\mathbf{j})^{[0,j]}_{k}=\\
            &\quad\quad\begin{cases}
                \lambda^{[0,j_{0}]}_{1}+2(1-\lambda(\mathbf{j}))
                &\text{if }j=j_{0}\text{ and }k=1,\\
                \lambda^{[0,j]}_{k}
                -(\delta(w^{[0]}_{j}-w^{[0]}_{j_{0}})-1)
                (1-\lambda(\mathbf{j}))&\text{otherwise}.
            \end{cases}
        \end{align*}
\end{prop}
\begin{proof}
    By the equation $(\ref{degree from NP})$, we have 
\begin{align*}
&\deg\bigg(
\mathrm{Prim\,} \prod_{i=1}^{p}
\mathrm{Add}_{a_{i}}^{-\lambda^{[i,j_{i}]}_{1}}
\prod_{i=0}^{p}
\Ad_{a_{i}}(e^{-w^{[i]}_{j_{i}}})
(P)\bigg)
-\sum_{k=1}^{s_{[0,j_{0}]}}m^{[0,j_{0}]}_{k}-m^{[0,j_{0}]}_{1}\\
&\quad=\sum_{i=1}^{p}\sum_{j=1}^{r_{i}}
(\delta(w^{[i]}_{j}-w^{[i]}_{j_{i}})+1)
\sum_{k=1}^{s_{[i,j]}}m^{[i,j]}_{k}\\
&\quad\quad+\sum_{j=1}^{r_{0}}
(\delta(w^{[0]}_{j}-w^{[0]}_{j_{0}})-1)
\sum_{k=1}^{s_{[0,j]}}m^{[0,j]}_{k}-\sum_{i=0}^{p}m^{[i,j_{i}]}_{1}.
\end{align*}
Thus the proposition follows from Theorem $\ref{Computation of Euler transform}$.
\end{proof}

The following proposition assures that the irreducibility is preserved by 
the Euler transform under a good condition.
\begin{prop}\label{irreducibility is preserved}
    Let us consider $P\in W[x]$ with $\mathrm{rank\,}P>1$ 
    and suppose that 
    there exists $\mathbf{j}\in \mathcal{J}(P)$ satisfying
    the condition $(\ref{generic})$ in Proposition \ref{Twisted 
    Euler transform}.
      
      Then if $P$ is irreducible in $W(x)$, 
      i.e., the ideal generated by 
      $P$ in $W(x)$ is a maximal ideal, then
      $E(\mathbf{j})P$ is irreducible in $W(x)$.
\end{prop}
\begin{proof}
    Put
    $
        P_{\mathbf{j}}=
    \mathrm{Prim}
    \prod_{i=1}^{p}
    \mathrm{Add}_{a_i}^{-\lambda_{1}^{[i,j_{i}]}}
    \prod_{i'=0}^{p}
    \Ad_{a_{i'}}(e^{-w_{j_{i'}}^{[i']}})
    (P).
    $
    Then Proposition \ref{Tsai} implies that $P_{\mathbf{j}}$
    generates the maximal ideal of $W[x]$. 
    Thus $\mathcal{L}^{-1}(P_{\mathbf{j}})$ also generates 
    the maximal ideal of $W[x]$. 
    Thus if $\mathcal{L}^{-1}(P_{\mathbf{j}})\notin K[x],$ 
    then $\mathcal{L}^{-1}(P_{\mathbf{j}})$ 
    generates the maximal ideal in $W(x)$.   
    Suppose conversely that
    $\mathcal{L}^{-1}(P_{\mathbf{j}})=f(x)\in K[x]$.
    Then $P_{\mathbf{j}}=f(-\partial)$. 
    Since $P_{\mathbf{j}}$ is irreducible, 
    $\deg_{K[x]}f(x)=1$. 
    This contradicts to the assumption $\mathrm{rank\,}P>1$. 
    Hence $\mathcal{L}^{-1}(P_{\mathbf{j}})$ is irreducible. 
    Since operators $\mathrm{Add}^{\mu}_{a}$ preserve
    the irreducibility,
    $\mathrm{Prim\,}\mathrm{Add}_{0}^{1-\lambda_{\mathbf{j}}}
    \mathcal{L}^{-1}(P_{\mathbf{j}})$ generates the maximal ideal 
    in $W[x]$ by Proposition \ref{Tsai}.
    Thus if $E(1-\lambda_{\mathbf{j}})(P_{\mathbf{j}})\notin K[x]$,
    then $E(1-\lambda_{\mathbf{j}})(P_{\mathbf{j}})$ is irreducible
    which implies that  $E(\mathbf{j})(P)$ is irreducible.
    Suppose that $E(1-\lambda_{\mathbf{j}})(P_{\mathbf{j}})=g(x)\in K[x]$.
    Then $\mathrm{Prim\,}\mathrm{Add}_{0}^{1-\lambda_{\mathbf{j}}}
    \mathcal{L}^{-1}(P_{\mathbf{j}})=g(\partial).$
    Hence $g(x)=ax+b$ for some $a,b\in K$ from the irreducibility. 
    Then there exists $f(x)\in K[x]$ such that 
     $
         \mathrm{Add}_{0}^{1-\lambda_{\mathbf{j}}}
         \mathcal{L}^{-1}(P_{\mathbf{j}})
         =f(x)(ax\partial+bx).
    $
    Thus $P_{\mathbf{j}}
    =f(-\partial)(-ax\partial-b\partial-1+\lambda_{\mathbf{j}}).$
    This is a contradiction since $\mathrm{rank\,}P>1$
    and $P$ is irreducible.
\end{proof}

As we see in Proposition \ref{Twisted Euler transform}, 
the twisted Euler transforms $E(\mathbf{j})$ change
the spectral type $(m^{[i,j]}_{k})
_{\substack{0\le i\le p,\,1\le j\le r_{i},\\
1\le k\le s_{[i,j]}}}$,
the tuple of integers. 
This can be extended to $\mathbb{Z}$-lattice transformations
as follows.
Define the $\Z$-lattice,
\begin{align*} 
    &L(P)=\\
    &\left\{\left(a^{[i,j]}_{1},\ldots,
    a^{[i,j]}_{s_{[i,j]}}\right)
    _{\substack{0\le i\le p\\1\le j\le r_{i}} }\,\bigg|\,
    a^{[i,j]}_{k}\in\Z,\ 
    \sum_{j=1}^{r_{0}}\sum_{k=1}^{s_{[0,j]}}a^{[0,j]}_{k}
    =\cdots
    =\sum_{j=1}^{r_{p}}\sum_{k=1}^{s_{[p,j]}}a^{[p,j]}_{k}
\right\}. 
\end{align*}
The rank of 
$\mathbf{a}=(a^{[i,j]}_{1},\ldots,a^{[i,j]}_{s_{[i,j]}})
_{\substack{0\le i\le p\\1\le j\le r_{i}}}\in L(P)$ is
defined by
$\mathrm{rank\,}(\mathbf{a})
=\sum_{j=1}^{r_{i}}\sum_{k=1}^{s_{[i,j]}}a^{[i,j]}_{k}.
$
We denote the set of positive elements in $L(P)$ by $L(P)^{+}$, i.e.,
all components of the elements in $L(P)^{+}$ are in $\mathbb{Z}_{\ge 0}$.
For each $\mathbf{j}=(j_{0},\ldots,j_{p})\in \mathcal{J}(P)$, 
let us define  a $\Z$-automorphism of $L(P)$ by
\[
\begin{array}{cccc}
    \sigma(\mathbf{j})\colon&L(P)&\longrightarrow&L(P)\\
    &\mathbf{a}=\left(a^{[i,j]}_{1},\ldots,
    a^{[i,j]}_{s_{[i,j]}}\right)
    _{\substack{0\le i\le p\\1\le j\le r_{i}}}
    &\longmapsto
    &\tilde{\mathbf{a}}=\left(\tilde{a}^{[i,j]}_{1},\ldots,
    \tilde{a}^{[i,j]}_{s_{[i,j]}}\right)
    _{\substack{0\le i\le p\\1\le j\le r_{i}}}
\end{array}
\]
where \begin{align*}
\tilde{a}^{[i,j]}_{1}&=a^{[i,j]}_{1}+d(\mathbf{a};\mathbf{j})
&\text{if }j=j_{i},\\
\tilde{a}^{[i,j]}_{k}&=a^{[i,j]}_{k}&\text{otherwise},
\end{align*}
and
\begin{align*}
d(\mathbf{a};\mathbf{j})&=
\sum_{i=1}^{p}\sum_{j=1}^{r_{i}}
(\delta(w^{[i]}_{j}-w^{[i]}_{j_{i}})+1)
\sum_{k=1}^{s_{[i,j]}}a^{[i,j]}_{k}\\
&\quad +\sum_{j=1}^{r_{0}}
(\delta(w^{[0]}_{j}-w^{[0]}_{j_{0}})-1)
\sum_{k=1}^{s_{[0,j]}}a^{[0,j]}_{k}-\sum_{i=0}^{p}a^{[i,j_{i}]}_{1}.
\end{align*}

The direct computation shows that $\sigma(\mathbf{j})$ is the 
involutive automorphism of $L(P)$ for each $\mathbf{j}\in \mathcal{J}(P)$,
i.e., $\sigma(\mathbf{j})^{2}=\mathrm{id}|_{L(P)}$.

In addition we define the following permutations on $L(P)$. 
For $i_{0}=0,\ldots,p,\, j_{0}=1,\ldots,
r_{i_{0}},\,k_{0}=1,\ldots,s_{[i_{0},j_{0}]}-1$, 
define
\[
\begin{array}{ccccc}
\sigma(i_{0},j_{0},k_{0})\colon &L(P)&\longrightarrow &L(P)&\\
&a^{[i_{0},j_{0}]}_{k_{0}}&\longmapsto&a^{[i_{0},j_{0}]}_{k_{0}+1},&\\
&a^{[i_{0},j_{0}]}_{k_{0}+1}&\longmapsto&a^{[i_0,j_{0}]}_{k_{0}},&\\
&a^{[i,j]}_{k}&\longmapsto&a^{[i,j]}_{k}
&\text{if }(i,j,k)\notin\{(i_{0},j_{0},k_{0}),
(i_{0},j_{0},k_{0}+1)\}.
\end{array}
\]

Then we can define the group $\tilde{W}(P)$ acting on $L(P)$ by
\begin{align*}
&\tilde{W}(P)=\\
&\langle\sigma(\mathbf{j}),\sigma(i,j,k)\mid 
\mathbf{j}\in \mathcal{J}(P),\,i=0,\ldots,p,
\,j=1,\ldots,r_{i},\,k=1,\ldots,s_{[i,j]}-1\rangle.
\end{align*}

Similarly consider the space
$R(P)=
\left\{\left(\alpha^{[i,j]}_{1},\ldots,
\alpha^{[i,j]}_{s_{[i,j]}}\right)
_{\substack{0\le i\le p\\1\le j\le r_{i}}}\,\Big|\, \alpha^{[i,j]}_{k}\in K
\right\}$
with the following transformations 
for $\mathbf{j}=(j_{0},\ldots,j_{p})\in \mathcal{J}(P)$,
\[
\begin{array}{cccc}
    \sigma(\mathbf{j})\colon &R(P)&\longrightarrow &R(P)\\
    &\left(\alpha^{[i,j]}_{1},\ldots,
    \alpha^{[i,j]}_{s_{[i,j]}}\right)
    _{\substack{0\le i\le p\\1\le j\le r_{i}}}
    &\longmapsto
    &\left(\tilde{\alpha}^{[i,j]}_{1},
    \ldots,\tilde{\alpha}^{[i,j]}_{s_{[i,j]}}\right)
    _{\substack{0\le i\le p\\1\le j\le r_{i}}}
\end{array}
\]
where 
\begin{align*}
    &\tilde{\alpha}^{[i,j]}_{k}=\\
    &\begin{cases}\alpha^{[i,j]}_{k}&\text{if }j=j_{i}\text{ and }k=1,\\
    \alpha^{[i,j]}_{k}-(\delta(w^{[i]}_{j}-w^{[i]}_{j_{i}})+1)
    (1-\alpha(\mathbf{j}))&\text{otherwise},
    \end{cases}
\end{align*}
and for $i=0$,
\begin{align*}
    &\tilde{\alpha}^{[0,j]}_{k}=\\
    &\begin{cases}
        \alpha^{[0,j_{0}]}_{1}+2(1-\alpha(\mathbf{j}))
        &\text{if }j=j_{0}\text{ and }k=1,\\
        \alpha^{[0,j]}_{k}
        -(\delta(w^{[0]}_{j}-w^{[0]}_{j_{0}})-1)
        (1-\alpha(\mathbf{j}))&\text{otherwise}.
    \end{cases}
\end{align*}
Here $\alpha(\mathbf{j})=\sum_{i=0}^{p}\alpha^{[i,j_{i}]}_{1}.$
Then it can be seen that $\sigma(\mathbf{j})$ is involutive 
for each $\mathbf{j}\in\mathcal{J}(P)$.
Also we define permutations $\sigma(i_{0},j_{0},s_{0})$ on $R(P)$ 
as we define on  $L(P)$.
Thus we have the action of $\tilde{W}(P)$ on $R(P)$.

Finally we shall consider the space of spectra. 
To so, let us recall the Fuchs relation first.
\begin{prop}[Bertrand \cite{Ber},\cite{BL}]
    Let us consider the above  $P\in W[x]$. Then we have the equation
\begin{align*}
    &\sum_{i=0}^{p}\sum_{j=1}^{r_{i}}
\sum_{k=1}^{s_{[i,j]}}\frac{m^{[i,j]}_{k}
(2\lambda^{[i,j]}_{k}+m^{[i,j]}_{k}-1)}{2}
-\frac{p-1}{2}\mathrm{rank\,}P(\mathrm{rank\,}P-1)
\\
&-\frac{1}{2}\sum_{i=0}^{p}\sum_{1\le j\neq j'\le r_{i}}
\delta(w^{[i]}_{j}-w^{[i]}_{j'})
(\sum_{k=1}^{s_{[i,j]}}m^{[i,j]}_{k})
(\sum_{k'=1}^{s_{[i,j']}}m^{[i,j']}_{k'})\\
&=0.
\end{align*}

This equation called the {\em Fuchs relation} of $P$.
\end{prop}

An analogy of the Fuchs relation
for $\mathbf{a}=(a^{[i,j]}_{1},\ldots,a^{[i,j]}_{s_{[i,j]}})
_{\substack{0\le i\le p\\1\le j\le r_{i}}}\in L(P)$ and
$\overline{\alpha}=(\alpha^{[i,j]}_{1},\ldots,\alpha^{[i,j]}_{s_{[i,j]}})
_{\substack{0\le i\le p\\1\le j\le r_{i}}}\in R(P)$
can be formally considered.
Put 
\begin{align*}
    \Lambda(\mathbf{a};\overline{\alpha})
    =&\sum_{i=0}^{p}\sum_{j=1}^{r_{i}}
\sum_{k=1}^{s_{[i,j]}}\frac{a^{[i,j]}_{k}
(2\alpha^{[i,j]}_{k}+a^{[i,j]}_{k}-1)}{2}-\frac{p-1}{2}\mathrm{rank\,}\mathbf{a}(\mathrm{rank\,}\mathbf{a}-1)\\
&-\frac{1}{2}\sum_{i=0}^{p}\sum_{1\le j\neq j'\le r_{i}}
\delta(w^{[i]}_{j}-w^{[i]}_{j'})
(\sum_{k=1}^{s_{[i,j]}}a^{[i,j]}_{k})
(\sum_{k'=1}^{s_{[i,j']}}a^{[i,j']}_{k'}).
\end{align*}

Then we define the {\em space of spectra},
\[
    S(P)=\{(\mathbf{a},\overline{\alpha})\in L(P)\times R(P)
    \mid \Lambda(\mathbf{a};\overline{\alpha})=0
    \}.
\]

We can define $\tilde{W}(P)$ action on $L(P)\times R(P)$ and show that
this action preserves $S(P)$.
\begin{lem}\label{weylpreserve}
    Let us define $\tilde{W}(P)$ action on $L(P)\times R(P)$ by 
    $\sigma(\mathbf{a},\overline{\alpha})
    =(\sigma(\mathbf{a}),\sigma(\overline{\alpha}))$ 
    for $\sigma\in \tilde{W}(P)$ 
    and $(\mathbf{a},\overline{\alpha})\in L(P) \times R(P)$.
    Then $\tilde{W}(P)$ acts on $S(P)$, namely we have 
    $\tilde{W}(P)S(P)\subset S(P)$.
\end{lem}
We shall prove this lemma in the next subsection.
\subsection{Euler transform and 
Weyl group actions on root lattices}
In the previous subsection, we define the $\tilde{W}(P)$-modules $L(P)$,
$R(P)$ and $S(P)$ as an analogy of the translations of spectra by 
the  Euler transform. 
In this subsection, we shall relate $L(P)$ to a Kac-Moody root lattice and 
show that the $\tilde{W}(P)$ action is compatible with the Weyl group 
action of 
the Kac-Moody root lattice.

Let us define the $\mathbb{Z}$-lattice $Q(P)$ with the basis
\[
    \mathcal{C}=\{c_{\mathbf{j}}\mid \mathbf{j}\in \mathcal{J}(P)\}
            \cup \{c(i,j,k)\mid i=0,\ldots,p,\,j=1,\ldots,r_{i},
            \,k=1,\ldots,s_{[i,j]}-1\},
            \]
namely,
$
Q(P)=\sum_{c\in \mathcal{C}}\Z c.
$
Let us introduce the symmetric bilinear form 
$\langle\ ,\ \rangle$ on $Q(P)$,
\begin{align*}
\langle c_{\mathbf{j}},c_{\mathbf{j}'}\rangle&
=2-\sum_{\substack{0\le i\le p\\ j_{i}\neq j_{i}'}}
\left(\delta(w^{[i]}_{j_{i}}-w^{[i]}_{j'_{i}})+1\right),\\
\langle c_{\mathbf{j}},c(i,j,k)\rangle&=
\begin{cases}
-1&\text{if }j=j_{i}\text{ and }k=1\\
0&\text{otherwise}
\end{cases},\\
\langle c(i,j,k),c(i',j',k')\rangle&=
\begin{cases}
2&\text{if }(i,j,k)=(i',j',k')\\
-1&\text{if }(i,j)=(i',j')\text{ and }|k-k'|=1\\
0&\text{otherwise}
\end{cases}.
\end{align*}
Here $\mathbf{j}=(j_{0},\ldots,j_{p}),
\mathbf{j}'=(j'_{0},\ldots,j'_{p})\in \mathcal{J}(P)$.
We call the lattice $Q(P)$ with the bilinear form $\langle\,,\,\rangle$
the \textit{Kac-Moody root lattice} (see \cite{Kac} for the detail)
associated with $P\in W[x]$.
The {\em simple reflections} with respect to $c\in \mathcal{C}$ is
\[
\sigma_{c}(\alpha)
=\alpha-2\langle c,\alpha\rangle c,
\quad (\alpha\in Q(P)).
\]
The \textit{Weyl group} is the group generated by the simple 
reflections,
i.e., $W(P)=\langle \sigma_{c}\mid c\in \mathcal{C}\, \rangle$.

Then the following theorem shows that 
the lattice $L(P)$ with the group $\tilde{W}(P)$ can be 
seen as a quotient lattice of the Kac-Moody root lattice
$Q(P)$ with the group $W(P)$.
\begin{thm}\label{Weyl group}
Let us define the $\Z$-module homomorphism
\[
\Phi\colon Q(P)\longrightarrow L(P)
\]
as follows.
For 
\[
\alpha=\sum_{\mathbf{j}\in\mathcal{J}(P)}
\alpha_{\mathbf{j}}c_{\mathbf{j}}
+\sum_{i=0}^{p}\sum_{j=1}^{r_{i}}\sum_{k=1}^{s_{[i,j]}-1}
\alpha(i,j,k)c(i,j,k)\in Q(P),
\]
the image 
$\Phi(\alpha)=\left(a^{[i,j]}_{1},\ldots,a^{[i,j]}_{s_{[i,j]}}\right)
_{\substack{0\le i\le p\\1\le j\le r_{i}}}$ is 
\begin{align*}
a^{[i,j]}_{1}&=\sum_{\{\mathbf{j}\in\mathcal{J}(P)
\mid j_{i}=j\}}\alpha_{\mathbf{j}}-\alpha(i,j,1),\\
a^{[i,j]}_{k}&=\alpha(i,j,k-1)-\alpha(i,j,k)
\quad \text{for }2\le k\le s_{[i,j]}.
\end{align*}
Here we put $\alpha(i,j,s_{[i,j]})=0$.
                Then we have the following.
\begin{enumerate}
\item The map $\Phi$ is surjective.
\item The map $\Phi$ is injective if and only if 
      $
      \#\{r_{i}\mid r_{i}> 1,\,i=0,\ldots,p\}\le 1.
      $
\item The Weyl group action on $Q(P)$ is compatible with  
    the action of $\tilde{W}(P)$ on $L(P)$. 
    Namely, we have 
    \begin{align*}
        \Phi(\sigma_{c_{\mathbf{j}}}(\alpha))&
        =\sigma(\mathbf{j})(\Phi(\alpha)),
        \quad (\, \mathbf{j}\in \mathcal{J}(P)),\\
    \Phi(\sigma_{c(i,j,k)}(\alpha))&=\sigma(i,j,k)(\Phi(\alpha)),
    \quad (i=0,\ldots,p,\,j=1,\ldots,r_{i},\,k=1,\ldots,s_{[i,j]}-1),
    \end{align*}
    for all $\alpha\in Q(P)$.
\item If $\alpha\in \mathrm{Ker\,}\Phi$, 
    then $\langle \alpha,\beta\rangle =0$ for any $\beta\in Q(P)$.
\end{enumerate}
\end{thm}
\begin{proof}        
    Let us take an arbitrary element 
    $\hat{\mathbf{j}}=(\hat{j}_{0},\ldots,\hat{j}_{p})
    \in \mathcal{J}(P)$. 
    Then we can check that images of 
     \[\begin{cases}c(i,j,s) 
             &\text{for }i=0,\ldots, p,\,
             j=1,\ldots,k_{i},\,s=1,\ldots,l_{i,j}-1,\\
             c_{\mathbf{j}} 
             &\text{for }\mathbf{j}\in
             \bigcup_{i=0}^{p}
             \{(f^{(i)}_{0},\ldots,f^{(i)}_{p})\in \mathcal{J}(P)
             \mid f^{(i)}_{i'}= \hat{j}_{i'}\text
             { if }i'\neq i\}
     \end{cases}
     \]
     generate $L(P)$. Hence $\Phi$ is surjective.

     Let us show 2. Since ranks of free $\Z$-modules $Q(P)$ 
     and $L(P)$ are
     \begin{align*}
         \mathrm{rank}_{\Z\text{-mod}}Q(P)&
         =\prod_{i=0}^{p}r_{i}+\sum_{i=0}^{p}
         \sum_{j=1}^{r_{i}}(s_{[i,j]}-1),\\
         \mathrm{rank}_{\Z\text{-mod}}L(P)
         &=\sum_{i=0}^{p}\sum_{j=1}^{r_{i}}s_{[i,j]}-p
     \end{align*}
     respectively, thus $\mathrm{rank}_{\Z\text{-mod}}Q(P)
     -\mathrm{rank}_{\Z\text{-mod}}L(P)
     =\prod_{i=0}^{p}r_{i}-\sum_{i=0}^{p}r_{i}+p.$
     Here we notice that 
     $
        \prod_{i=0}^{p}r_{i}-\sum_{i=0}^{p}r_{i}+p\ge 0.
     $
     Indeed $g(r_{0},\ldots,r_{p})
     =\prod_{i=0}^{p}r_{i}-\sum_{i=0}^{p}r_{i}$
     is the increasing function of each $r_{i}\in \mathbb{Z}_{>0}$
     for $i=0,\ldots,p$,
     since $\frac{\partial}{\partial r_{i}}
     g(r_{0},\ldots,r_{p})
     =\prod_{j\neq i}r_{j}-1\ge 0.$
     Thus, 
     \[
     g(r_{0},\ldots,r_{p})\ge g(1,\ldots,1)=-p.
     \]
     If we assume that there exist at least two $r_{i_{1}}$ 
     and $r_{i_{2}}$ satisfying $r_{i_{1}}\ge 2$ 
     and $r_{i_{2}}\ge 2$, then
     \[
     g(r_{0},\ldots,r_{p})\ge 4-(2+2+(p-1))=-p+1.
     \]
     Hence $\mathrm{rank}_{\Z\text{-mod}}Q(P)
     -\mathrm{rank}_{\Z\text{-mod}}L(P)\ge 1.$
     This shows $\Phi$ is not injective.
     On the contrary, if all $r_{i}$ are $r_{i}=1$
     except only one $r_{i_{0}}$, then
     $
     g(r_{0},\ldots,r_{p})=r_{i_{0}}-r_{i_{0}}-p.
     $
     Hence 
     $
     \mathrm{rank}_{\Z\text{-mod}}Q(P)
     =\mathrm{rank}_{\Z\text{-mod}}L(P).
     $
    Then since $\Phi$ is surjective, this shows that $\Phi$ is injective.

    Let us show 3. We have
    \begin{align*}
        &d(\Phi(\alpha);\mathbf{j})
        =\sum_{i=1}^{p}\sum_{j=1}^{r_{i}}
        (\delta(w^{[i]}_{j}-w^{[i]}_{j_{i}})+1)
        \sum_{k=1}^{s_{[i,j]}}a^{[i,j]}_{k}\\
        &\quad +\sum_{j=1}^{r_{0}}(\delta
        (w^{[0]}_{j}-w^{[0]}_{j_{0}})-1)
        \sum_{k=1}^{s_{0,j}}a^{[0,j]}_{k}
        -\sum_{i=0}^{p}a^{[i,j_{i}]}_{1}\\
        &=\sum_{i=1}^{p}\sum_{l=0}^{\infty}(l+1)
        \sum_{\{\mathbf{j}'\in \mathcal{J}(P)\mid
        \delta(w^{[i]}_{j'_{i}}-w^{[i]}_{j_{i}})=l\}}
        \alpha_{\mathbf{j}'}
        +\sum_{l=0}^{\infty}(l-1)
        \sum_{\{\mathbf{j}''\in\mathcal{J}(P)
        \mid \delta(w^{[0]}_{j''_{0}}-w^{[0]}_{j_{0}})=l\}}
        \alpha_{\mathbf{j}''}\\
        &\quad-\sum_{i=0}^{p}\sum_{\{\mathbf{j}'''\in\mathcal{J}(P)
        \mid j'''_{i}=j_{i}\}}\alpha_{\mathbf{j}'''}
        +\sum_{i=0}^{p}\alpha(i,j_{i},1)\\ &=\sum_{\mathbf{j}'\in\mathcal{J}(P)}\alpha_{\mathbf{j}'}
        \bigg(\sum_{i=0}^{p}
        \big(\delta(w^{[i]}_{j'_{i}}-w^{[i]}_{j_{i}})+1\big)-2
        -\#\{k\mid j'_{k}=j_{k}, k=0,\ldots,p\}\bigg)\\
        &\quad+\sum_{i=0}^{p}\alpha(i,j_{i},1)\\
        &=-\langle c_{\mathbf{j}},\alpha\rangle.
      \end{align*}
      Hence we have 
      $\Phi(\sigma_{c_{\mathbf{j}}}(\alpha))
      =\sigma(\mathbf{j})(\Phi(\alpha)).$
      Equations $\Phi(\sigma_{c(i,j,k)}(\alpha))
      =\sigma(i,j,k)(\Phi(\alpha))$
   similarly follow.  
   
   Let us show 4. If $\alpha\in \Ker\Phi$, 
   then $d(\Phi(\alpha);\mathbf{j})=0$. 
   Thus $\langle c_{\mathbf{j}},\alpha\rangle=0$ 
   for all $\mathbf{j}\in \mathcal{J}(P)$.
   Similarly we have $\langle c(i,j,k),\alpha\rangle=0$ 
   for all $i=0,\ldots,p,\,j=1,\ldots,r_{i},\,k=1,\ldots,s_{[i,j]}-1$. 
   Hence if $\alpha\in \Ker\Phi$, 
   then $\langle \beta,\alpha\rangle=0$ for all $\beta\in Q(P)$. 
\end{proof}
\if0
  In general inverse images elements in 
  \[L^{+}(P)=\{\mathbf{a}=\prod_{i=0}^{p}(a_{i,1},\ldots,a_{i,l_{i}})\in L(P)\mid a_{i,j}\in \Z_{\ge 0}\}
  \]
  are not necessary to be in 
  \[
  Q^{+}(P)=\{\alpha=\sum_{c\in\mathcal{C}}a_{c}c\in Q(P)\mid a_{c}\in \Z_{\ge 0}\}.
  \]
  The following lemmas gives a sufficient condition to be
  \[
  \Phi^{-1}(\mathbf{a})\cup Q^{+}(P)\neq \emptyset
  \]
  for $\mathbf{a}\in L^{+}(P)$.

  \begin{df}
      \normalfont
      For $\mathbf{a}\in L(P)$, let us define
      \[
      \mathcal{T}(\mathbf{a})=\{t=(t_{0},\ldots,t_{p})\in \mathcal{T}(P)\mid \sum_{s=1}^{l_{t_{i}}}|a_{t_{i},s}|\neq 0\text{ for all }i=0,\ldots,p\}.
      \]
  \end{df}
  \begin{lem}\label{d not zero}
      Let us take an  $\mathbf{a}\in L^{+}(P)$. Then if 
      \[
      \mathrm{idx\,}\mathbf{a}+\mathrm{rank\,} \mathbf{a}>0,
      \]
      there exists $\tau\in \mathcal{T}(\mathbf{a})$ such that
     \[
     d(\mathbf{a};\tau)\le 0.
     \] 
 \end{lem}
 \begin{proof}
     For the simplicity of the notation, we put $m=\mathrm{rank\,}\mathbf{a}$.
     Suppose that we have  $d(\mathbf{a};t)>0$ for all $t\in \mathcal{T}(\mathbf{a})$. Let us take  $\tau\in \mathcal{T}(\mathbf{a})$ satisfying $d(\mathbf{a};\tau)=\min\{d(\mathbf{a};t)\mid t\in \mathcal{T}(\mathbf{a})\}$ and write $\tau =\prod_{i=0}^{p}\tau_{i}=\prod_{i=0}^{p}(i,j_{\tau_{i}})$. We  define 
     \[
     \tau_{i,j}=(i,j)\times \prod_{k\neq i}^{p}\tau_{k}
     \]
     for $i=0,\ldots,p$ and $j\in \{1,\ldots,k_{i}\}\backslash\{j_{\tau_{i}}\}$. Also  we  consider 
     \begin{align*}
         \alpha_{\tau}(\mathbf{a})=&\sum_{i=0}^{p}\sum_{j\neq j_{\tau_{i}}}^{k_{i}}a_{i,j}c_{\tau_{i,j}}+(\sum_{i=0}^{p}a_{\tau_{i}}-pm)c_{\tau}\\
         &+\sum_{i=0}^{p}\sum_{j=1}^{k_{i}}\sum_{s=1}^{l_{i,j}-1}(a_{i,j}-\sum_{r=1}^{s}a_{i,j,r})c(i,j,s)\in Q(P),
     \end{align*}
     where 
     \[
     a_{i,j}=\sum_{s=1}^{l_{i,j}}a_{i,j,s}
     \]
     for $i=0,\ldots,p$ and $j=1,\ldots,k_{i}$. Then we see $\Phi(\alpha_{\tau}(\mathbf{a}))=\mathbf{a}$. On the other hand, we have
     \begin{align*}
        \mathrm{idx\,}\mathbf{a}&=\langle\alpha_{\tau}(\mathbf{a}),\alpha_{\tau}(\mathbf{a})\rangle\\
        &\le -\sum_{i=0}^{p}\sum_{j\in\{1,\ldots,k_{i}\}\setminus\{\tau_{i}\}}a_{i,j}d(\mathbf{a};\tau_{i,j})-\left( \sum_{i=0}^{p}a_{i,\tau_{i}}-pm\right)d(\mathbf{a};\tau)\\ 
        &= -\sum_{i=0}^{p}\sum_{j\in\{1,\ldots,k_{i}\}\setminus\{\tau_{i}\}}a_{i,j}d(\mathbf{a};\tau_{i,j})-\sum_{i=0}^{p}a_{i,\tau_{i}}d(\mathbf{a};\tau)\\
        &\quad\quad +pm\cdot d(\mathbf{a};\tau)\\
        &=-d(\mathbf{a};\tau)\sum_{i=0}^{p}\sum_{j=1}^{k_{i}}a_{i,j}+pm\cdot d(\mathbf{a};\tau)\\
        &\quad\quad +\sum_{i=0}^{p}\sum_{j\in\{1,\ldots,k_{i}\}\setminus\{\tau_{i}\}}a_{i,j}(d(\mathbf{a};\tau)-d(\mathbf{a};\tau_{i,j}))\\
        &\le -(p+1)m\cdot d(\mathbf{a};\tau)+pm\cdot d(\mathbf{a};\tau)\cdot \mathrm{rank\,}\mathbf{a}\\
        &=-m\cdot d(\mathbf{a};\tau)\le -m.
    \end{align*} 
     Here we notice that $a_{i,j}=0$ if and only if $\tau_{i,j}\notin \mathcal{T}(\mathbf{a})$, since $\tau\in \mathcal{T}(\mathbf{a})$.
     This contradicts to the assumption.
 \end{proof}
 \begin{lem}\label{the positivity of the inverse image}
     Let us take an  $\mathbf{a}\in L^{+}(P)$. If 
      \[
      \mathrm{idx\,}\mathbf{a}+\mathrm{rank\,} \mathbf{a}>0,
      \]
     then 
     \[
     \Phi^{-1}(\mathbf{a})\cap Q^{+}(P)\neq \emptyset.
     \]
 \end{lem}
 \begin{proof}
     From Lemma \ref{d not zero}, we can choose $\tau\in \mathcal{T}(\mathbf{a})$ such that $d(\mathbf{a};\tau)\le 0$. Let us consider $\alpha_{\tau}(\mathbf{a})\in Q(P)$  as in Lemma \ref{d not zero}. Then we can see $\alpha_{\tau}(\mathbf{a})\in Q^{+}(P)$. To see this, we need to check 
 \[
 (\sum_{i=0}^{p}a_{\tau_{i}}-pm)\ge 0.
 \]
 Here we use the same notations as in the proof of Lemma $\ref{d not zero}$.
 Since all coefficients of $\alpha_{\tau}(\mathbf{a})$ except $(\sum_{i=0}^{p}a_{\tau_{i}}-pm)$ are non negative, we have 
 \begin{align*}
     0\ge d(\mathbf{a};\tau)=-\langle c_{\tau},\alpha_{\tau}(\mathbf{a})\rangle \ge -2(\sum_{i=0}^{p}a_{\tau_{i}}-pm).
 \end{align*}
 
 Hence we have the required inequality.
 \end{proof}
 \fi

The Weyl group $W(P)$ can be seen as a Coxeter group defined by generators 
 and relations. Let us define integers
 \[
     m(c,c')=
     \begin{cases}
         2&\text{if }\langle c,c'\rangle = 0,\\
         3&\text{if }\langle c,c'\rangle = -1,\\
         \infty&\text{otherwise}
     \end{cases}
 \]
 for $c,c'\in \mathcal{C}$.
 Then $W(P)$ can be seen as the group generated by following generators 
 with relations,
 \begin{align*}
     W(P)=\langle
     \sigma_{c}\ (c\in \mathcal{C})\mid \, \sigma_{c}^{2}=e,\,
     (\sigma_{c}\sigma_{c'})^{m(c,c')}=e,\,c,c'\in \mathcal{C}\,
     \rangle
 \end{align*}
 (see Proposition 3.13 in \cite{Kac}). 
 Here $e$ denotes the identity element in $W(P)$ and use the notation $\sigma^{\infty}=e$ for $\sigma\in W(P)$.
 \begin{prop}\label{Weylexp}
     The Weyl group $W(P)$ acts on $R(P)$ as follows,
     \begin{align*}
         \sigma_{c_{\mathbf{j}}}(\mathbf{\mu})
         &=\sigma(\mathbf{j})(\mathbf{\mu}),\\
     \sigma_{c(i,j,k)}(\mathbf{\mu})&=\sigma(i,j,k)(\mathbf{\mu})
     ,\quad \mu\in R(P). 
     \end{align*}
 \end{prop}
\begin{proof}
   We need to see that the $W(P)$ action on $R(P)$ is well-defined.
   Namely, if for $c,c'\in \mathcal{C}$
   reflections $\sigma_{c},\sigma_{c'}$ 
   satisfy Coxeter relations
   \begin{align*}
       &\sigma_{c}^{2}=\sigma_{c'}^{2}=e,\\
       &(\sigma_{c}\sigma_{c'})^{m(c,c')}=e
   \end{align*}
   in $W(P)$ for the positive integer $m(c,c')$
   (sometimes it is $\infty$), then we should have
  \begin{align}
       &\sigma_{c}^{2}\mu=\sigma_{c'}^{2}\mu=\mu,\label{involutive}\\
       &(\sigma_{c}\sigma_{c'})^{m(c,c')}\mu=\mu\label{Coxeter}
   \end{align}
   for all $\mu\in R(P)$. 
   The involutive relations $(\ref{involutive})$ 
   are directly from the definition. 
   We check the relations $(\ref{Coxeter})$.

   Let us take $\mathbf{j},\mathbf{j}'\in \mathcal{J}(P)\, 
   (\mathbf{j}\neq \mathbf{j}')$ 
   and compute $(\sigma(\mathbf{j})\sigma(\mathbf{j}))^{m}$ on $R(P)$. 
   For $\nu=\left(\nu^{[i,j]}_{1},\ldots,\nu^{[i,j]}_{s_{[i,j]}}\right)
   _{\substack{0\le i\le p\\1\le j\le r_{i}}}\in R(P)$,
   we can see $$(\sigma(\mathbf{j}')\sigma(\mathbf{j}))^{m}\nu
   =\nu(m)=\left(\nu(m)^{[i,j]}_{1},\ldots,
   \nu(m)^{[i,j]}_{s_{[i,j]}}\right)
   _{\substack{0\le i\le p\\1\le j\le r_{i}}}$$ are as follows.
   For  $i=1,\ldots,p$, we have the following.
   \begin{itemize}
       \item If $j_{i}=j'_{i}$,
           \[
               \nu(m)^{[i,j]}_{k}=\begin{cases}
                   \nu^{[i,j]}_{1},&\text{if }j=j_{i}=j'_{i}\text{ and }
                   k=1,\\
               \begin{split}\nu^{[i,j]}_{k}&-(\delta
                   (w^{[i]}_{j}-w^{[i]}_{j'_{i}})+1)
                   \sum_{u=1}^{m}\mu(u)_{\mathbf{j}'}\\
                   &-(\delta(w^{[i]}_{j}-w^{[i]}_{j_{i}})+1)
                   \sum_{u=1}^{m}\mu(u)_{\mathbf{j}},\end{split}
                   &\text{otherwise}.
           \end{cases}
           \]
       \item If $j_{i}\neq j'_{i}$,
           \[
               \nu(m)^{[i,j]}_{k}=\begin{cases} 
                   \nu^{[i,j]}_{1}-(\delta
                   (w^{[i]}_{j_{i}}-w^{[i]}_{j'_{i}})+1)
                   \sum_{u=1}^{m}\mu(u)_{\mathbf{j}'},
                   &\text{if }j=j_{i}\text{ and }k=1,\\
                   \nu^{[i,j]}_{1}-(\delta
                   (w^{[i]}_{j'_{i}}-w^{[i]}_{j_{i}})+1)
                   \sum_{u=1}^{m}\mu(u)_{\mathbf{j}},
                   &\text{if }j=j'_{i}\text{ and }k=1,\\
                   \begin{split}\nu^{[i,j]}_{k}&
                   -(\delta(w^{[i]}_{j}-w^{[i]}_{j'_{i}})+1)
                   \sum_{u=1}^{m}\mu(u)_{\mathbf{j}'}\\
                   &-(\delta(w^{[i]}_{j}-w^{[i]}_{j_{i}})+1)
                   \sum_{u=1}^{m}\mu(u)_{\mathbf{j}},\end{split}
                   &\text{otherwise}.
           \end{cases}
           \]
       \end{itemize}
           Similarly $\nu(m)^{[0,j]}_{k}$ are as follows.
 \begin{itemize}
       \item If $j_{0}=j'_{0}$,
           \[
               \nu(m)^{[0,j]}_{k}=\begin{cases}
                   \nu^{[0,j]}_{1}+2(\sum_{u=1}^{m}(\mu(u)_{\mathbf{j}}
                   +\mu(u)_{\mathbf{j}})),
                   &\text{if }j=j_{0}=j'_{0}\text{ and }k=1,\\
                   \begin{split}\nu^{[0,j]}_{k}&-(\delta
                       (w^{[0]}_{j}-w^{[0]}_{j'_{0}})-1)
                       \sum_{u=1}^{m}\mu(u)_{\mathbf{j}'}\\
                       &-(\delta(w^{[0]}_{j}-w^{[0]}_{j_{0}})-1)
                       \sum_{u=1}^{m}\mu(u)_{\mathbf{j}},\end{split}
                       &\text{otherwise}.
           \end{cases}
           \]
       \item If $j_{0}\neq j'_{0}$,
           \[
               \nu(m)^{[0,j]}_{k}=\begin{cases} 
                   \begin{split}\nu^{[0,j]}_{1}
                       &+2\sum_{u=1}^{m}\mu(u)_{\mathbf{j}}\\
                       &-(\delta
                       (w^{[0]}_{j_{0}}-w^{[0]}_{j'_{0}})-1)
                       \sum_{u=1}^{m}\mu(u)_{\mathbf{j}'},\end{split}&
                       \text{if }j=j_{0},\text{ and }k=1,\\
                       \begin{split}\nu^{[0,j]}_{1}&
                           +2\sum_{u=1}^{m}\mu(u)_{\mathbf{j}'}\\
                           &-(\delta(w^{[0]}_{j'_{0}}
                           -w^{[0]}_{j_{0}})-1)
                           \sum_{u=1}^{m}\mu(u)_{\mathbf{j}},\end{split}
                           &\text{if }j=j'_{0}\text{ and }k=1,\\
                           \begin{split}\nu^{[0,j]}_{k}
                               &-(\delta
                               (w^{[0]}_{j}-w^{[0]}_{j'_{0}})-1)
                               \sum_{u=1}^{m}\mu(u)_{\mathbf{j}'}\\
                               &-(\delta
                               (w^{[0]}_{j}-w^{[0]}_{j_{0}})-1)
                               \sum_{u=1}^{m}\mu(u)_{\mathbf{j}},
                           \end{split}&\text{otherwise}.
                       \end{cases}
           \]
           Here $\mu(u)_{\mathbf{j}}$ and $\mu(u)_{\mathbf{j}'}$
           are defined by
           \begin{align*}
               \mu(1)_{\mathbf{j}}&=1-\sum_{i=0}^{p}\nu^{[i,j_{i}]}_{1},\\
               \mu(u)_{\mathbf{j}}&=-\mu(u-1)_{\mathbf{j}}
               +E\mu(u-1)_{\mathbf{j}'},\\
               \mu(1)_{\mathbf{j}'}&=1-\sum_{i=0}^{p}\nu^{[i,j'_{i}]}_{1}
               +E\mu(1)_{\mathbf{j}},\\
               \mu(u)_{\mathbf{j}'}&=-\mu(u-1)_{\mathbf{j}'}
               +E\mu(u)_{\mathbf{j}},
           \end{align*}
           where
           $E=
               \sum_
               {\substack{0\le i\le p\\ j_{i}\neq j'_{i}}}\left(\delta
               (w^{[i]}_{j_{i}}-w^{[i]}_{j'_{i}})+1\right).
            $
       \end{itemize}
       Then it follows that 
       \begin{align*}
           \sum_{u=1}^{m}\mu(u)_{\mathbf{j}}&=
           \begin{cases}
               E_{m}\mu(m-1)_{\mathbf{j}}&\text{ if $m$ is odd},\\
               E_{m}\mu(m-1)_{\mathbf{j}'}&\text{ if $m$ is even},
           \end{cases}\\
           \sum_{u=1}^{m}\mu(u)_{\mathbf{j}'}&=
           \begin{cases}
               E_{m}\mu(m-1)_{\mathbf{j}'}&\text{ if $m$ is odd},\\
               E_{m}\mu(m-1)_{\mathbf{j}}&\text{ if $m$ is even}.
           \end{cases}
       \end{align*}
       Here each $E_{m}$ is inductively defined by
       \[
           E_{m}=-E_{m-2}+E\cdot E_{m-1},
       \]
       from the initial values $E_{0}=1, E_{1}=E$. 
       Then we have
       \[
           \begin{cases} E_{2}=0&\text{ if $E=0$},\\
               E_{2}\neq 0\text{ and } E_{3}=0&\text{ if $E=1$}.
           \end{cases}
       \]
       If $E=2$, then the recurrence relation implies 
       \[
           E_{m}=m+1.
       \]
       If $E> 2$, we have
       \[
           E_{m}=\frac{\beta^{m+1}-\alpha^{m+1}}{\beta-\alpha},
       \]
       where $\alpha=\frac{E+\sqrt{E^{2}-4}}{2}$ and 
       $\beta=\frac{E-\sqrt{E^{2}-4}}{2}$. 
       Thus $E_{m}\neq 0$ for all $m\ge 0$ if $E\ge 2$.

Then we have 
\[(\sigma(\mathbf{j})\sigma(\mathbf{j}'))^{m}
    =\mathrm{id}|_{R(P)}\ \text{for}
\begin{cases}m=2&\text{if }E=0,\\
    m=3&\text{if }E=1.\\
\end{cases}
\]
Similarly the direct computation shows that
\[
    (\sigma(\mathbf{j})\sigma(i,j,k))^{m}
    =\mathrm{id}|_{R(P)}\ \text{for }
\begin{cases}
    m=3&\text{if }j=j_{i}\text{ and }k=1,\\
    m=2&\text{otherwise},
\end{cases}
\]
and 
\[
(\sigma(i,j,s)\sigma(i',j',s'))^{m}=\mathrm{id}|_{R(P)}\ \text{for }
\begin{cases}
    m=3&\text{if }(i,j)=(i',j')\text{ and }|s-s'|=1,\\
    m=2&\text{otherwise}.
\end{cases}
\]

\end{proof}

For $\mathbf{a}\in L(P)$ let us define 
$$\mathrm{idx\,}\mathbf{a}
=\langle \Phi^{-1}(\mathbf{a}),\Phi^{-1}(\mathbf{a})\rangle$$
and call the \textit{index of rigidity} of $\mathbf{a}$.
Theorem \ref{Weyl group} assures that this definition is well-defined
and moreover $\mathrm{idx\,}\mathbf{a}$ is invariant under the action of 
$\tilde{W}(P)$ since we know $\langle w(\alpha),w(\beta)\rangle=
\langle \alpha,\beta\rangle$ for any $\alpha,\beta\in Q(P)$
and $w\in W(P)$.

For
    $\mathbf{a}=\left(a^{[i,j]}_{1},\ldots,
    a^{[i,j]}_{s_{[i,j]}}\right)_
    {\substack{0\le i\le p\\1\le j\le r_{i}}}\in L(P)$
    and 
    $\overline{\alpha}
    =\left(\alpha^{[i,j]}_{1},\ldots,
    \alpha^{[i,j]}_{s_{[i,j]}}\right)
    _{\substack{0\le i\le p\\1\le j\le r_{i}}}\in R(P)$,
    let us define
    $\overline{\alpha}\cdot \mathbf{a}=
    \sum_{i=0}^{p}\sum_{j=1}^{r_{i}}
    \sum_{k=1}^{s_{[i,j]}}\alpha^{[i,j]}_{k}a^{[i,j]}_{k}$.

Then we can rewrite the Fuchs relation in terms of the index of rigidity.
\begin{lem}\label{fuchsindex}
    For $(\mathbf{a},\overline{\alpha})
    \in S(P)$,
    we have the equations
    \begin{align*}
        &\mathrm{idx\,}\mathbf{a}
        =-\sum_{i=0}^{p}\sum_{1\le j\neq j'\le r_{i}}
        \delta(w^{[i]}_{j}-w^{[i]}_{j'})
        \left(\sum_{k=1}^{s_{[i,j]}}a^{[i,j]}_{k}\right)
        \left(\sum_{k=1}^{s_{[i,j']}}a^{[i,j']}_{k}\right)\\
        &\quad\quad\quad
        +\sum_{i=0}^{p}\sum_{j=1}^{r_{i}}\sum_{k=1}^{s_{[i,j]}}
        (a^{[i,j]}_{k})^{2}-(p-1)(\mathrm{rank\,}\mathbf{a})^{2},\\
        &\Lambda(\mathbf{a},\overline{\alpha})=
        \overline{\alpha}\cdot \mathbf{a}
        +\frac{\mathrm{idx\,}\mathbf{a}}{2}
        -\mathrm{rank\,}\mathbf{a}.
    \end{align*}
\end{lem}
\begin{proof}
    Define $\mathcal{J}(P)^{(i)}_{1}=\{(j^{(i)}_{0},
    \ldots,j^{(i)}_{p})\in \mathcal{J}(P)\mid 
    j_{i'}^{(i)}=1\text{ if }i\neq i'\}$ and 
    $\mathcal{J}(P)_{1}=\amalg_{i=0}^{p}\mathcal{J}(P)^{(i)}_{1}$,
    the disjoint union of $\mathcal{J}(P)^{(i)}_{1}$, $i=0,\ldots,p$. 
    For $i=0,\ldots,p,\,j=1,\ldots,r_{i}$ let us define
    $\mathbf{j}^{(i,j)}=(j_{0},\ldots,j_{p})\in \mathcal{J}(P)^{(i)}_{1}$
    by $j_{i}=j$ and $j_{i'}=1$ for $i'\neq i$. 

    Then choose the element 
    $\xi=\sum_{c\in \mathcal{C}}\xi_{c}c \in \Phi^{-1}(\mathbf{a})$
    so that
    \begin{align*}
        \xi_{c_{\mathbf{j}}}&=\begin{cases}
            0&\text{if }\mathbf{j}\notin 
            \mathcal{J}(P)_{1},\\
            \sum_{k=1}^{s_{[i,j]}}a^{[i,j]}_{k}
            &\text{ if }\mathbf{j}=\mathbf{j}^{(i,j)}\text{ and }
            j\neq 1,\\
            \sum_{i=0}^{p}\sum_{k=1}^{s_{[i,1]}}a^{[i,1]}_{k}
            -p\,\mathrm{rank\,}\mathbf{a}&
            \text{ if }\mathbf{j}=\mathbf{1}=(1,1,\ldots,1),
        \end{cases}\\
        \xi_{c(i,j,k)}&=\sum_{k'=k+1}^{s_{[i,j]}}a^{[i,j]}_{k'}.
    \end{align*}
    For $i=0,\ldots,p,\,j=1,\ldots,r_{i}$ let us define
    $\mathbf{j}^{(i,j)}=(j_{0},\ldots,j_{p})\in \mathcal{J}(P)^{(i)}_{1}$
    by $j_{i}=j$ and $j_{i'}=1$ for $i'\neq i$. 
    Then 
    \begin{align*}
        \mathrm{idx\,}\mathbf{a}&=\langle\xi,\xi\rangle\\
        &=\sum_{\mathbf{j}\in \mathcal{J}(P)}\xi_{c_{\mathbf{j}}}
        \langle c_{\mathbf{j}},\xi\rangle+
        \sum_{i=0}^{p}\sum_{j=1}^{r_{i}}\sum_{k=1}^{s_{[i,j]-1}}
        \xi_{c(i,j,k)}\langle c(i,j,k),\xi\rangle\\
        &=-\sum_{\mathbf{j}\in \mathcal{J}(P)}\xi_{c_{\mathbf{j}}}
        d(\mathbf{a};\mathbf{j})\\
        &\quad+
        \sum_{i=0}^{p}\sum_{j=1}^{r_{i}}\sum_{k=1}^{s_{[i,j]-1}}
        \xi_{c(i,j,k)}
        \left(2\xi_{c(i,j,k)}-\left(\xi_{c(i,j,k-1)}+\xi_{c(i,j,k+1)}
        \right)\right).
    \end{align*}
    Here we put $\xi_{c(i,j,s_{[i,j]})}=0$, $\xi_{c(i,j,0)}
    =\xi_{c_{\mathbf{j}^{(i,j)}}}$ if $j\neq 1$  and 
    $\xi_{c(i,1,0)}=
    \xi_{c_{\mathbf{1}}}$.

    Then 
    \begin{align*}
        &\mathrm{idx\,}\mathbf{a}=
        -\sum_{i=0}^{p}\sum_{1\le j\neq j'\le r_{i}}
        \delta(w^{[i]}_{j}-w^{[i]}_{j'})
        \left(\sum_{k=1}^{s_{[i,j]}}a^{[i,j]}_{k}\right)
        \left(\sum_{k=1}^{s_{[i,j']}}a^{[i,j']}_{k}\right)\\
        &+\sum_{i=0}^{p}\sum_{j=1}^{r_{i}}\left(
        \sum_{k=1}^{s_{[i,j]}}a_{k}^{[i,j]}\right)a^{[i,j]}_{1}
        -(p-1)(\mathrm{rank\,}\mathbf{a})^{2}\\
        &+ \sum_{i=0}^{p}\sum_{j=1}^{r_{i}}\sum_{k=1}^{s_{[i,j]-1}}
        \xi_{c(i,j,k)}
        (2\xi_{c(i,j,k)}-(\xi_{c(i,j,k-1)}+\xi_{c(i,j,k+1)}))\\
        &=-\sum_{i=0}^{p}\sum_{1\le j\neq j'\le r_{i}}
        \delta(w^{[i]}_{j}-w^{[i]}_{j'})
        \left(\sum_{k=1}^{s_{[i,j]}}a^{[i,j]}_{k}\right)
        \left(\sum_{k=1}^{s_{[i,j']}}a^{[i,j']}_{k}\right)\\
        &+\sum_{i=0}^{p}\sum_{j=1}^{r_{i}}\left(
        \sum_{k=1}^{s_{[i,j]}}a_{k}^{[i,j]}\right)a^{[i,j]}_{1}
        -(p-1)(\mathrm{rank\,}\mathbf{a})^{2}\\
        &+ \sum_{i=0}^{p}\sum_{j=1}^{r_{i}}\left(\sum_{k=1}^{s_{[i,j]}}
        (a^{[i,j]}_{k})^{2}
        -a^{[i,j]}_{1}\sum_{k'=1}^{s_{[i,j]}}a^{[i,j]}_{k'}\right)
    \end{align*}
    \begin{align*}
        &=-\sum_{i=0}^{p}\sum_{1\le j\neq j'\le r_{i}}
        \delta(w^{[i]}_{j}-w^{[i]}_{j'})
        \left(\sum_{k=1}^{s_{[i,j]}}a^{[i,j]}_{k}\right)
        \left(\sum_{k=1}^{s_{[i,j']}}a^{[i,j']}_{k}\right)\\
        &+\sum_{i=0}^{p}\sum_{j=1}^{r_{i}}\sum_{k=1}^{s_{[i,j]}}
        (a^{[i,j]}_{k})^{2}-(p-1)(\mathrm{rank\,}\mathbf{a})^{2}.
    \end{align*}

    Then the second equation directly follows from the first one.
\end{proof}

By using the description of $\Lambda(\mathrm{a},\overline{\alpha})$ in 
Lemma \ref{fuchsindex}, let us give the proof of Lemma \ref{weylpreserve}.
\begin{proof}[Proof of Lemma \ref{weylpreserve}]
        One can directly check that          
        $\sigma(\mathbf{j})(\overline{\alpha})\cdot 
        \sigma(\mathbf{j})(\mathbf{a})=\overline{\alpha}\cdot\mathbf{a}
        +d(\mathbf{a};\mathbf{j})$ for all $\mathbf{j}\in \mathcal{J}(P)$
        (cf. Theorem 5.2 in \cite{O}).
        Note that $\sigma(i,j,k)$ does not change $\overline{\alpha}\cdot
        \mathbf{a}$ and $\mathrm{rank\,}\mathbf{a}$.
        Therefore since  we have the equation
        \[\Lambda(\mathbf{a},\overline{\alpha})=
        \overline{\alpha}\cdot \mathbf{a}
        +\frac{\mathrm{idx\,}\mathbf{a}}{2}
        -\mathrm{rank\,}\mathbf{a},
        \]
        and $\mathrm{idx\,}\mathbf{a}$ is invariant under the 
        $\tilde{W}(P)$ action,
        we have $\Lambda(\sigma(\overline{\alpha},\mathbf{a}))=
        \Lambda(\overline{\alpha},\mathbf{a})$.
\end{proof}
  \subsection{Examples : affine Weyl group symmetries of Heun equations.}\label{Heun and Painleve}
  Let us see some examples of Theorem \ref{Weyl group}. 
  As examples, we consider the Heun differential operator 
  and its confluent operators (cf. \cite{S-L} for instance). 
  
  \vspace{3mm}
  \noindent (1) Heun differential operator.

 The Heun differential operator is the differential operator of the form
 \begin{multline*}
 P=x(x-1)(x-t)\partial^{2}+\{c(x-1)(x-t)+dx(x-t)\\
 +(a+b+1-c-d)x(x-1)\}\partial+(abx-\lambda).
 \end{multline*}

 This has regular singular points at $x=0,1,t,\infty$ 
 and the following spectra,

 \begin{align*}
     &\{(0,1-c);(1,1)\}\quad\text{at $x=0$},\\
     &\{(0,1-d);(1,1)\}\quad\text{at $x=1$},\\
     &\{(0,c+d-a-b);(1,1)\}\quad\text{at $x=t$},\\
     &\{(a,b);(1,1)\}\quad\text{at $x=\infty$}.
 \end{align*}

 By Theorem \ref{Weyl group}, we can define the root lattice $Q(P)$ 
 with the following Dynkin diagram.
 
 \hspace{40mm}
\begin{xy}
    \ar@{-} *++!D{c_{1}} *\cir<4pt>{};
    (10,0) *++!D!R(0.6){c_{0}} *\cir<4pt>{}="A",
    \ar@{-} "A"; (10,10) *++!L{c_{2}} *\cir<4pt>{},
    \ar@{-} "A"; (20,0) *++!L{c_{3}} *\cir<4pt>{},
    \ar@{-} "A"; (10,-10) *++!L{c_{4}} *\cir<4pt>{}
 \end{xy}

 Note that $\mathcal{J}(P)$ consists of a point. Thus here we denote by
 $c_{0}$ the corresponding base of $Q(P)$.
 Also $c_{i}$ for $i=1,\ldots,4$ denote the basis of $Q(P)$ corresponding 
 to $c(i,1,1)$ in the previous notation.
 Here the diagram is drawn by the following rule. 
 If $c_{i}$ and $c_{j}$ in  the basis of $Q(P)$ 
 satisfy $\langle c_{i},c_{j}\rangle=-m(i,j)$, 
 then corresponding vertices $c_{i}$ and $c_{j}$ 
 are connected by $m(i,j)$ edges. We can see 
 \[
 \langle c_{1},c_{0}\rangle =-1,\quad \langle c_{1},c_{2}\rangle=0
 \]
 from this diagram for example.
 
 This diagram is that of the affine $D_{4}^{(1)}$ type root system. 
 For the above spectra defines the element
 \[
 \mathbf{m}(P)=\left((1,1),(1,1),(1,1),(1,1)\right)\in L(P)
 \]
and  we can associate the element in $Q(P)$
 \[
 \Phi^{-1}(\mathbf{m}(P))=2c_{0}+\sum_{i=1}^{4}c_{i}.
 \]
 This is an imaginary root of $Q(P)$. 
 
 We can see that $\delta(P)=\Phi^{-1}(\mathbf{m}(P))$ is fixed under
 the action of $W(P)$. 
 Namely, Euler transforms $E(\mathbf{j})$
 and permutations $\sigma(i,j,s)$
 do not change the spectral type $\mathbf{m}(P)$. 
 On the other hand, characteristic exponents are changed 
 by $E(\mathbf{j})$ and permutations. 
 As we see in Proposition $\ref{Weylexp}$, 
 the Weyl group $W(P)$ acts on the space 
 of characteristic exponents $R(P)$ as well. 
 Thus we can conclude that characteristic 
 exponents of the Heun differential operator 
 has affine $D^{(1)}_{4}$ Weyl group symmetry 
 generated by twisted Euler transform and permutations. 

 \vspace{3mm}
 \noindent (2) Confluent Heun differential operator.

 The confluent Heun differential operator is
 \[
 P^{c}= x(x-1)\partial^{2}+\{-tx(x-1)+c(x-1)+dx\}\partial
     +(-tax+\lambda).
 \]
 This operator has regular singular points at $x=0,1$ and irregular singular point at $x=\infty$. The spectra are
 \begin{align*}
     \{(0,1-c);(1,1)\}\quad\text{at $x=0$},\\
     \{(0,1-d);(1,1)\}\quad\text{at $x=1$},
 \end{align*}
 for regular singular points and 
 \begin{align*}
 \{(a);(1)\}\quad \text{with $w_{1}=0$},\\
 \{(c+d-a);(1)\}\quad \text{with $w_{2}=tx$},
 \end{align*}
 for the irregular singular point $x=\infty$. Then the corresponding root system has the following extended Dynkin diagram.
 
 \hspace{40mm}
 \begin{xy}  
     \ar@{-}(0,5) *++!D{c_{1}} *\cir<4pt>{}="B"; (10,5) *++!D{c_{2}} *\cir<4pt>{}="C",
     \ar@{-}(0,-5) *++!D!R(0.4){c_{3}} *\cir<4pt>{}="D"; (10,-5) *++!D!R(0.4){c_{4}} *\cir<4pt>{}="E",
     \ar@{-}"B";"D",
     \ar@{-}"C";"E"
\end{xy}

This corresponds to  the affine $A^{(1)}_{4}$ root system. And we have 
\[
\delta(P^{c})=\Phi^{-1}(\mathbf{m}(P^{c}))=\sum_{i=1}^{4}c_{i}.
\]
This $\delta(P^{c})$ is the imaginary root of $Q(P^{c})$ 
and $W(P^{c})$-invariant. 
Hence as well as the Heun differential operator, 
we can conclude that the characteristic exponents 
of confluent Heun differential operator has affine $A^{(1)}_{4}$
Weyl group symmetry generated 
by twisted Euler transforms and permutations.

\vspace{3mm}
\noindent (3) Biconfluent Heun differential operator.

Let us consider the biconfluent Heun differential operator,
\[
P^{bc}=x\partial^{2}+(-x^{2}-tx+c)\partial+(-ax+\lambda).
\]

This has regular singular point at $x=0$ with the spectrum,
\[
\{(0,1-c);(1,1)\},
\]
and irregular singular point at $x=\infty$ with the spectra,
\begin{align*}
    &\{(a);(1)\}\quad\text{with $w_{1}=0$},\\
    &\{(c+1-a);(1)\}\quad\text{with $w_{2}=x+t$}.
\end{align*}

The corresponding diagram and the element in $Q(P^{bc})$ are as follows,

\hspace{40mm}
\begin{xy}
    \ar@{-}(0,0) *++!D{c_{1}} *\cir<4pt>{}="F"; (10,0) *++!D{c_{2}} *\cir<4pt>{}="G",
    \ar@{-} (5,8.5) *++!D{c_{3}} *\cir<4pt>{}="H"; "F",
     \ar@{-}"H";"G"
\end{xy},
\[
\delta(P^{bc})=\Phi^{-1}(\mathbf{m}(P^{bc}))=\sum_{i=1}^{3}c_{i}.
\]

Hence this is the affine $A^{(1)}_{3}$ root system 
and $\delta(P^{bc})$ is the imaginary root of this root system.
As well as the above examples, we can see that $P^{bc}$ 
has the affine $A^{(1)}_{3}$ Weyl group symmetry generated 
by twisted Euler transforms and permutations.

\vspace{3mm}
\noindent (4) Triconfluent Heun differential operator.

The triconfluent Heun differential operator is 
\[
P^{tc}=\partial^{2}+(-x^{2}-t)\partial+(-ax+\lambda).
\]
As well as the above examples, 
we can see that $P^{tc}$ has 
the affine $A^{(1)}_{2}$ Weyl group symmetry generated 
by twisted Euler transforms. 

Indeed the spectra are
\begin{align*}
    &\{(a);(1)\}\quad\text{with $w_{1}=0$},\\
    &\{(2-a);(1)\}\quad\text{with $w_{2}=x^{2}+t$}
\end{align*}
at the irregular singular point $x=\infty$. And we can see that 
\[
\delta(P^{tc})=\Phi^{-1}(\mathbf{m}(P^{tc}))=c_{1}+c_{2}
\]
is the imaginary root of the root system with the Dynkin diagram,

\hspace{40mm}
\begin{xy}
    \ar@2{-}(0,0) *++!D{c_{1}} *\cir<4pt>{}; 
    (10,0) *++!D{c_{2}} *\cir<4pt>{}
\end{xy}.

\vspace{3mm}
\noindent (5) Doubly confluent Heun differential operator.

The doubly confluent Heun differential operator is 
\[
P^{dc}=x^{2}\partial^{2}+(-x^{2}+cx+t)\partial+(-ax+\lambda).
\]

The spectra are
\begin{align*}
    &\{(0);(1)\}\quad\text{with $w^{0}_{1}=0$},\\
    &\{(2-c);(1)\}\quad\text{with $w^{(1)}_{1}=\frac{-t}{x}$},
\end{align*}
at $x=0$ and 
\begin{align*}
    &\{(a);(1)\}\quad\text{with $w^{\infty}_{1}=0$},\\
    &\{(c-a);(1)\}\quad\text{with $w^{\infty}_{2}=x$}
\end{align*}
at $x=\infty$. 
Then the corresponding diagram is 

\hspace{40mm}
\begin{xy}
    \ar@{=} (0,0) *++!D{c_{1}} *\cir<4pt>{}; (10,0) *++!D{c_{2}} *\cir<4pt>{},
    \ar@{};(20,0) *{\bigoplus},
    \ar@{=}(30,0) *++!D{c_{3}} *\cir<4pt>{}; (40,0) *++!D{c_{4}} *\cir<4pt>{}
\end{xy},
and 
\[
\delta(P^{dc})=\Phi^{-1}(\mathbf{m}(P^{dc}))
=\sum_{i=1}^{4}c_{i}+a(c_{1}+c_{2}-c_{3}-c_{4})\ (a\in \Z).
\]
Here we notice that 
$(c_{1}+c_{2}-c_{3}-c_{4})\in \mathrm{Ker\,}\Phi$. 
We can see that $\delta(P^{dc})$ is $W(P^{dc})$-invariant. 
Hence we can conclude that $P^{dc}$ has Weyl group $W(P^{dc})$ symmetry.

Let us give comments about the relationship with Painlev\'e equations. 
As is known, if we put an apparent singular point 
to each Heun operators and consider the isomonodromic deformation, 
then we can obtain  Painlev\'e equations, 
namely, $P_{VI}$ from the Heun operator, 
$P_{V}$ from the confluent Heun operator, $P_{VI}$ 
from the biconfluent Heun operator, 
$P_{III}$ from the doubly confluent Heun  operator, 
and $P_{II}$ from the triconfluent Heun operator respectively
(see \cite{Ok}).

It is known that these Painlev\'e equations have following 
affine Weyl group symmetries generated by B\"acklund transformations.
\[
 \begin{array}{|c|c|c|c|c|}
     \hline
     P_{VI}&P_{V}&P_{IV}&P_{II}&P_{III}\\
     \hline
     D^{(1)}_{4}&A^{(1)}_{3}&A^{(1)}_{2}&A^{(1)}_{1}&(A_{1}\oplus A_{1})^{(1)}\\
     \hline
 \end{array}
\]
Our Weyl groups recover these Painlev\'e symmetries. 
\subsection{$\Phi$-root system}
  We shall define an analogue of the root system in $L(P)$, 
  called $\Phi$-roots and show that if $P$ is irreducible, 
  then the spectral type $\left(m^{[i,j]}_{1},\ldots,
  m^{[i,j]}_{s_{[i,j]}}\right)
  _{\substack{0\le i\le p\\1\le j\le r_{i}}}\in L(P)$ is a $\Phi$-root.
  
  \begin{df}[Minimal element]
      \normalfont
      If $\mathbf{a}\in L(P)^{+}\backslash\{0\}$
      satisfies the following conditions, 
      we say that $\mathbf{a}$ is {\em minimal}.
      For any $\mathbf{j}\in \mathcal{J}(P)$, 
      we have $\mathrm{rank\,}\sigma(\mathbf{j})(\mathbf{a})
      \ge \mathrm{rank\,}\mathbf{a}$
      or $\sigma(\mathbf{j})(\mathbf{a})\notin L(P)^{+}$.
  \end{df}
  \begin{df}\label{basicgeneric}
      \normalfont
      Let us consider $(\mathbf{a},\overline{\alpha})\in S(P)$ 
      with $\mathbf{a}\in L(P)^{+}\backslash\{0\}$.
      Then 
      we say that $(\mathbf{a},\overline{\alpha})$ is {\em generic} 
      if the following are satisfied.

      There exist a minimal element $\mathbf{m}$ and $w\in \tilde{W}(P)$
      such that $w(\mathbf{a})=\mathbf{m}$ and $w$ is decomposed
      as 
      $w=\sigma_{q}\sigma({\mathbf{j}_{q}})
      \sigma_{q-1}\sigma(\mathbf{j}_{q-1})\cdots
      \sigma(\mathbf{j}_{1})\sigma_{0}$.
      Here $\sigma_{l}\in \langle\sigma(i,j,k)\mid 0\le i\le p,\,
      1\le j\le r_{i},\,1\le k\le s_{[i,j]}-1\rangle$ 
      and $\sigma(\mathbf{j}_{l})$, 
      are chosen as follows.
      \begin{enumerate}
          \item For $l=1,\ldots,q$, 
              $\sigma_{l}\sigma(\mathbf{j}_{l})\cdots
      \sigma(\mathbf{j}_{1})\sigma_{0}(\mathbf{a})\in L^{+}(P)$.
  \item The condition $(\ref{generic})$ in Proposition 
      \ref{Twisted Euler transform}
      is valid for  
      $\sigma_{0}(\overline{\alpha})$ and 
      all $\sigma_{l}\sigma(\mathbf{j}_{l})\cdots 
      \sigma_{1}\sigma(\mathbf{j}_{1})\sigma_{0}(\overline{\alpha})$,
      $l=1,\ldots,q$.
  \end{enumerate}
  \end{df}
Let us define $\Phi$-root system of $L(P)$ 
as an analogue of the root system of $Q(P)$. 
First recall the definition of roots of $Q(P)$. 
\textit{Real roots} are elements in
\[
\Delta_{\text{re}}=\bigcup_{c\in \mathcal{C}}W(P)c,
\]
the union of $W(P)$-orbits of $c\in\mathcal{C}$. 
To define imaginary roots, let us consider the set  
\[
F=\{\alpha\in Q(P)^{+}
=\sum_{c\in \mathcal{C}}\Z_{\ge 0}c\mid 
\substack{\langle \alpha,c\rangle \le 0\text{ for all }c
\in \mathcal{C},\\ \supp(\alpha)
\text{ is connected.}}\}\backslash\{\mathbf{0}\}.
\]
Here we say $\supp(\alpha)$ is connected 
if $\alpha=\sum_{c\in\mathcal{C}}\alpha_{c}c$ satisfies the following. 
If $I=\{c\in \mathcal{C}\mid \alpha_{c}\neq0\}$ is decomposed 
by a disjoint union $I=I_{1}\amalg I_{2}$ 
such that we have $\langle c_{1},c_{2}\rangle =0$ 
for all $c_{1}\in I_{1}$ and $c_{2}\in I_{2}$, 
then $I_{1}=\emptyset$ or $I_{2}=\emptyset$.

Then \textit{imaginary roots} are elements in
\[
\Delta_{\text{im}}=W(P)F\cup -(W(P)F).
\]

Also \textit{roots} are elements in 
\[
\Delta=\Delta_{\text{re}}\cup \Delta_{\text{im}}.
\]

Let us define $\Phi$-roots as an analogue of $\Delta$. 
We define \textit{$\Phi$-real roots} as elements in
\[
\Delta^{\Phi}_{\text{re}}
=\bigcup_{\mathbf{j}\in\mathcal{J}(P)}\tilde{W}(P)\Phi(c_{\mathbf{j}}).
\]
Define the subset
\[
F^{\Phi}
=
\left\{\mathbf{a}=\left(a^{[i,j]}_{1},\ldots,
a^{[i,j]}_{s_{[i,j]}}\right)
_{\substack{0\le i\le p\\1\le j\le r_{i}}}\in L(P)^{+}
\backslash\{\mathbf{0}\}\,\Big|\,
\substack{a^{[i,j]}_{1}\ge a^{[i,j]}_{2}\ge \cdots 
\ge a^{[i,j]}_{s_{[i,j]}},\, d(\mathbf{a};\mathbf{j})\ge 0\\
\text{for all }i=0,\ldots,p,\,j=1,\ldots,r_{i},
\mathbf{j}\in\mathcal{J}(P)}\right\}.
\]
Then \textit{$\Phi$-imaginary roots} are elements in
\[
\Delta^{\Phi}_{\text{im}}=\tilde{W}(P)F^{\Phi}\cup -(\tilde{W}(P)F^{\Phi}).
\]
Also \textit{$\Phi$-roots} are elements in 
$\Delta^{\Phi}=\Delta^{\Phi}_{\text{re}}\cup \Delta^{\Phi}_{\text{im}} 
$
and in particular \textit{$\Phi$-positive roots} are elements in 
$
\Delta^{\Phi+}=\Delta^{\Phi}\cap  \prod_{i=0}^{p}
\prod_{j=1}^{k_{i}}\Z_{\ge 0}^{l_{i,j}}
$.

The next proposition shows  that $\Delta^{\Phi}$ can be seen as a
generalization of the root system  $\Delta$.
\begin{prop}
    For any $\mathbf{a}\in \Delta^{\Phi}$, 
    there exists $\alpha\in \Delta$ such that 
    $\Phi(\alpha)=\mathbf{a}$.
\end{prop}
\begin{proof}
    It is clear that 
    $\Phi(\Delta^{\text{re}})\supset \Delta^{\Phi}_{\text{re}}$
    by definition. 
    Also we have  $\Phi(F)\supset F^{\Phi}$ from Lemma 6 in 
    \cite{HO}. 
    Thus $\Phi(\Delta^{\text{im}})\supset \Delta^{\Phi}_{\text{im}}$.
\end{proof}
The next theorem shows that if  the differential operator $P$ is 
irreducible in $W(x)$,
then the corresponding spectral type 
$\left(m^{[i,j]_{1}},\ldots,
m^{[i,j]}_{s_{[i,j]}}\right)
_{\substack{0\le i\le p\\1\le j\le r_{i}}}\in L(P)$
is a $\Phi$-root.
\begin{thm}\label{root and irreducible}
    Let us consider $P\in W[x]$ as above. 
    Suppose that $P$ is irreducible in $W(x)$ and 
    $(\mathbf{m},\overline{\lambda})
    =\left( (m^{[i,j]}_{1},\ldots,m^{[i,j]}_{s_{[i,j]}})
    _{\substack{0\le i\le p\\1\le j\le r_{i}}},\,
    (\lambda^{[i,j]}_{1},\ldots,\lambda^{[i,j]}_{s_{[i,j]}})
    _{\substack{0\le i\le p\\1\le j\le r_{i}}}\right)\in S(P)$
    is generic.
    Then we have the following. 
    \begin{enumerate}
        \item The spectral type  $\mathbf{m}\in L(P)$ is in $\Delta^{\Phi+}$. 
        \item If $\mathrm{idx\,}\mathbf{m}>0$, 
            then $\mathrm{idx\,}\mathbf{m}=2$. 
        \item We have
    \[
    \mathbf{m}\in\begin{cases}
        \Delta^{\Phi}_{\text{re}}&\text{if }\mathrm{idx\,}\mathbf{m}=2,\\
        \Delta^{\Phi}_{\text{im}}&\text{if }\mathrm{idx\,}\mathbf{m}\le 0.
    \end{cases}
    \]
\end{enumerate}
\end{thm}
\begin{proof}
    Since $(\mathbf{m},\overline{\lambda})$ is generic, we can choose 
    minimal element $\mathbf{a}\in L(P)^{+}$ and $w\in \tilde{W}(P)$
    as in Definition \ref{basicgeneric}.
    If there exist $\mathbf{j}_{0}\in \mathcal{J}(P)$ such that 
    $\sigma(\mathbf{j}_{0})(\mathbf{a})\notin L(P)^{+}$, 
    then Proposition \ref{irreducibility is preserved} shows 
    $\mathrm{rank\,}\mathbf{a}=1$. 
    Thus there exist $\sigma\in \langle \sigma(i,j,k)\mid
    0\le i\le p,\,1\le j\le r_{i},\,1\le k\le s_{[i,j]}\rangle$
    and $\mathbf{j}\in \mathcal{J}(P)$ 
    such that $\sigma(\mathbf{a})\in\Phi(c_{\mathbf{j}})$.
    Hence $\mathbf{m}\in \Delta^{\Phi}_{\text{re}}$ and 
    $\mathrm{idx\,}\mathbf{m}=\mathrm{idx\,}\mathbf{a}=2$.
    
    Next we assume 
    $\mathrm{rank\,}\sigma(\mathbf{j})(\mathbf{a})
    \ge \mathrm{rank\,}\mathbf{a}$
    for any $\mathbf{j}\in \mathcal{J}(P)$ and 
    we show that  $\mathrm{idx\,}\mathbf{m}\le 0$. 
    Applying elements in $\langle \sigma(i,j,k)\mid 0\le i\le p,\,
    1\le j\le r_{i},\,1\le k\le s_{[i,j]}-1\rangle$ to $\mathbf{a}$,
    we may assume $\mathbf{a}=\left(a^{[i,j]}_{1},\ldots,
    a^{[i,j]}_{s_{[i,j]}}\right)
    _{\substack{0\le i\le p\\1\le j\le r_{i}}}$ satisfies that
    $a^{[i,j]}_{1}\ge a^{[i,j]}_{2}\ge \cdots\ge a^{[i,j]}_{s_{[i,j]}}$
    for all $i=0,\ldots,p$ and $j=1,\ldots,r_{i}$.
    Then Lemma 6 in \cite{HO} shows that there exists $\alpha\in F$ 
    such that $\Phi(\alpha)=\mathbf{a}$. Thus 
    $\mathrm{idx\,}\mathbf{a}=\langle \alpha,\alpha\rangle \le 0$.
\end{proof}
In the theory of the middle convolution (cf. \cite{K}), 
the Katz algorithm is one of the most important results, 
which shows that if an irreducible Fuchsian differential operator 
or a local system is rigid, i.e., uniquely determined by 
local structures around their singular points, 
i.e., equivalent classes of local monodromies at singular points,
then this operator or local system can be reduced to 
rank 1 element by finite iteration of 
the middle convolutions and the additions. 
This rigidity condition is estimated by the certain number, 
so-called the index of rigidity. 
Namely, one can show that a Fuchsian differential operator 
or local system are rigid if and only 
if their index of rigidity is 2.

A generalization of this theorem 
for non-Fuchsian differential operators 
is obtained by D. Arinkin and D. Yamakawa independently 
(see \cite{A} and \cite{Y}). 
We can show an analogue of their results 
as a immediate consequence of Theorem \ref{root and irreducible}.
\begin{cor}[Cf. Arinkin \cite{A} and Yamakawa \cite{Y}]
    We use the same notation as in Theorem \ref{root and irreducible}.
    Suppose that $P$ is irreducible in $W(x)$ and 
    $(\mathbf{m},\overline{\lambda})\in S(P)$ is generic.
    Then we can reduce $P$ to a rank 1 operator 
    by finite iteration of the twisted Euler transform 
    if and only if $\mathrm{idx\,}\mathbf{m}=2.$
\end{cor}

\end{document}